\documentclass[11pt]{amsart} \textwidth=14.5cm \oddsidemargin=1cm
\evensidemargin=1cm

\usepackage{amsmath,wasysym}
\usepackage{amsxtra}
\usepackage{amscd}
\usepackage{amsthm}
\usepackage{amsfonts}
\usepackage{amssymb}
\usepackage{eucal}
\usepackage{graphics, color}
\usepackage{hyperref,mathrsfs}

\input prepictex
\input pictex
\input postpictex
%\usepackage{mathptm}

%Numbering methods:
\newtheorem{thm}{Theorem}[section]
\newtheorem{lem}[thm]{Lemma}
\newtheorem{cor}[thm]{Corollary}
\newtheorem{prop}[thm]{Proposition}

\theoremstyle{definition}

\newtheorem{conjecture}[thm]{Conjecture}

\theoremstyle{remark}
\newtheorem{rem}[thm]{Remark}

\numberwithin{equation}{section}

\begin{document}

%Referring commands:
\newcommand{\thmref}[1]{Theorem~\ref{#1}}
\newcommand{\secref}[1]{Section~\ref{#1}}
\newcommand{\lemref}[1]{Lemma~\ref{#1}}
\newcommand{\propref}[1]{Proposition~\ref{#1}}
\newcommand{\corref}[1]{Corollary~\ref{#1}}
\newcommand{\remref}[1]{Remark~\ref{#1}}
\newcommand{\eqnref}[1]{(\ref{#1})}
\newcommand{\exref}[1]{Example~\ref{#1}}

%Simplified symbols:
\newcommand{\nc}{\newcommand}
\nc{\Z}{{\mathbb Z}}
%\nc{\hZ}{{\hf+\mathbb Z}}
\nc{\hZ}{{\underline{\mathbb Z}}}
\nc{\C}{{\mathbb C}}
\nc{\N}{{\mathbb N}}
\nc{\F}{{\mf F}}
\nc{\Q}{\mathbb {Q}}
\nc{\la}{\lambda}
\nc{\ep}{\delta}
\nc{\h}{\mathfrak h}
\nc{\n}{\mf n}
\nc{\A}{{\mf a}}
\nc{\G}{{\mathfrak g}}
\nc{\SG}{\overline{\mathfrak g}}
\nc{\DG}{\widetilde{\mathfrak g}}
\nc{\D}{\mc D}
\nc{\Li}{{\mc L}}
\nc{\La}{\Lambda}
\nc{\is}{{\mathbf i}}
\nc{\V}{\mf V}
\nc{\bi}{\bibitem}
\nc{\NS}{\mf N}
\nc{\dt}{\mathord{\hbox{${\frac{d}{d t}}$}}}
\nc{\E}{\EE}
\nc{\ba}{\tilde{\pa}}
\nc{\half}{\frac{1}{2}}
\nc{\mc}{\mathcal}
\nc{\mf}{\mathfrak} \nc{\hf}{\frac{1}{2}}
\nc{\hgl}{\widehat{\mathfrak{gl}}}
\nc{\gl}{{\mathfrak{gl}}}
\nc{\hz}{\hf+\Z}
\nc{\dinfty}{{\infty\vert\infty}}
\nc{\SLa}{\overline{\Lambda}}
\nc{\SF}{\overline{\mathfrak F}}
\nc{\SP}{\overline{\mathcal P}}
\nc{\U}{\mathfrak u}
\nc{\SU}{\overline{\mathfrak u}}
\nc{\ov}{\overline}
\nc{\wt}{\widetilde}
\nc{\wh}{\widehat}
\nc{\sL}{\ov{\mf{l}}}
\nc{\sP}{\ov{\mf{p}}}
\nc{\osp}{\mf{osp}}
\nc{\spo}{\mf{spo}}
\nc{\hosp}{\widehat{\mf{osp}}}
\nc{\hspo}{\widehat{\mf{spo}}}
\nc{\hh}{\widehat{\mf{h}}}
\nc{\even}{{\bar 0}}
\nc{\odd}{{\bar 1}}
\nc{\mscr}{\mathscr}

\newcommand{\blue}[1]{{\color{blue}#1}}
\newcommand{\red}[1]{{\color{red}#1}}
\newcommand{\green}[1]{{\color{green}#1}}
\newcommand{\white}[1]{{\color{white}#1}}
%%

%% added by WW

\newcommand{\aaf}{\mathfrak a}
\newcommand{\bb}{\mathfrak b}
\newcommand{\cc}{\mathfrak c}
\newcommand{\qq}{\mathfrak q}
\newcommand{\qn}{\mathfrak q (n)}
\newcommand{\UU}{\bold U}
\newcommand{\VV}{\mathbb V}
\newcommand{\WW}{\mathbb W}
\nc{\TT}{\mathbb T}
\nc{\EE}{\mathbb E}
\nc{\FF}{\mathbb F}
\nc{\KK}{\mathbb K}
\nc{\rl}{\texttt{r,l}}

 \advance\headheight by 2pt

\title{Supercharacters of queer Lie superalgebras}

\author[Cheng]{Shun-Jen Cheng$^\dagger$}
\thanks{$^\dagger$Partially supported by a MoST and an Academia Sinica Investigator grant}
\address{Institute of Mathematics, Academia Sinica, Taipei,
Taiwan 10617} \email{chengsj@gate.sinica.edu.tw}

\begin{abstract}
Let $\G=\G_\even\oplus\G_\odd$ be the queer Lie superalgebra and let $L$ be a finite-dimensional non-trivial irreducible $\G$-module. Restricting the $\G$-action on $L$ to $\G_\even$, we show that the space of $\G_\even$-invariants $L^{\G_\even}$ is trivial. As a consequence we establish a conjecture first formulated by Gorelik, Grantcharov and Mazorchuk on the triviality of the supercharacter of irreducible $\G$-modules in the case when the modules are finite dimensional.
\end{abstract}

\subjclass[2010]{17B67}

\maketitle

\section{Introduction}

Let $\G=\G_\even\oplus\G_\odd$ be the queer Lie superalgebra. Let $\mf b$ be its standard Borel subalgebra and $\h=\h_{\bar 0}\oplus\h_{\bar 1}$ be its standard Cartan subalgebra (see Section \ref{sec:queer}). We denote by $\mc O$ the BGG category of $U(\mf b)$-locally finite, $\h_{\bar 0}$-semisimple, finitely generated $U(\G)$-modules. Unless otherwise stated, the morphisms in $\mc O$ are all (not necessarily even) morphisms.

For an object $M=M_{\bar 0}\oplus M_{\bar 1}$ in $\mc O$ and $\mu \in \mathfrak{h}_{\bar{0}}^*$, let $$M_{\mu}:=\{m\in M| h\cdot m = \mu(h)m, \text{ for } h\in \mathfrak{h}_{\bar{0}}\}$$ stand for its $\mu$-weight space. Thus, we have the weight space decomposition:
\begin{align*}
M=\bigoplus_{\mu\in\h^*_{\bar 0}}M_\mu=\bigoplus_{\mu\in\h^*_{\bar 0}}M_{\mu,\bar 0}\oplus \bigoplus_{\mu\in\h^*_{\bar 0}}M_{\mu,\bar 1}.
\end{align*}
The character and supercharacter of $M$ are
\begin{align*}
\text{ch}M=\sum_{\mu\in\h^*_\even}\dim M_\mu e^\mu,\quad \text{sch}M=\sum_{\mu\in\h^*_\even}\left(\dim M_{\mu,\bar 0}-\dim M_{\mu,\bar 1}\right) e^\mu,
\end{align*}
respectively, where $e$ is a formal indeterminate.

While the irreducible character problem has been studied extensively in the literature because of its obvious importance in the representation theory of the queer Lie superalgebra (see e.g.~\cite{Pe, PS2, Br2, Sv2, CC, CK, CKW, BD, SZ} for a sample), for supercharacters it is expected that the following should hold.

\begin{conjecture}\cite[Conjecture 2.4]{GG2}\label{conj:GGM} Let $L(\la)$ be the irreducible highest weight modules in $\mc O$ of highest weight $\la$. Suppose that $\la\not=0$. Then
\begin{align*}
\text{sch} L(\la)=0.
\end{align*}
\end{conjecture}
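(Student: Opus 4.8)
The plan is to reduce the conjecture to a multiplicity computation in the Grothendieck group of $\mc O$, and then to isolate the one genuinely hard case. First, I would localize the supercharacter at the zero weight. If $\mu\in\h^*_\even$ is a weight of a module $M$ in $\mc O$, then $M_\mu$ is finite dimensional, and since $[\h_\odd,\h_\odd]\subseteq\h_\even$ acts on it through the scalar $\mu$, it is a module over the Clifford superalgebra $\mathrm{Cl}(\h_\odd,\langle\,,\,\rangle_\mu)$ attached to the symmetric form $\langle H,H'\rangle_\mu:=\mu([H,H'])$. For $\G=\qn$ this form is diagonal with entries proportional to the coordinates of $\mu$, so it has rank $r\ge 1$ exactly when $\mu\neq 0$; restricting $M_\mu$ to a non-degenerate Clifford subalgebra $\mathrm{Cl}_r$ — a simple superalgebra all of whose simple supermodules have vanishing superdimension — gives $\dim M_{\mu,\even}=\dim M_{\mu,\odd}$. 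Hence $\text{sch}\,M=\text{sdim}(M_0)\,e^0$ for every $M$ in $\mc O$; in particular $\text{sch}\,L(\la)=\text{sdim}\big(L(\la)_0\big)e^0$. (If $\sum_i\la_i\neq 0$ this already vanishes: the identity of $\G_\even\cong\gl(n)$ is even and central in $\G$, so it acts by $\sum_i\la_i$ on $L(\la)$ and by $0$ on $L(\la)_0$, forcing $L(\la)_0=0$.)

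Second, I would compute supercharacters of Verma modules and invert. By PBW, $M(\mu)\cong U(\n^-)\otimes u(\mu)$ as $\h_\even$-weight-graded superspaces, where $u(\mu)$ is the irreducible $\h$-module of highest weight $\mu$; each positive root of $\qn$ contributes one even and one square-zero odd root vector to $\n^-$, and these cancel in the supercharacter, so $\text{sch}\,U(\n^-)=\prod_{\alpha>0}\frac{1-e^{-\alpha}}{1-e^{-\alpha}}=1$, whence $\text{sch}\,M(\mu)=\text{sch}\,u(\mu)=\text{sdim}\big(u(\mu)\big)e^\mu$. As above $u(\mu)$ is a simple $\mathrm{Cl}(\h_\odd,\langle\,,\,\rangle_\mu)$-module, so $\text{sdim}\,u(\mu)=0$ for $\mu\neq 0$ and $\text{sdim}\,u(0)=1$. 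Since Verma modules in $\mc O$ have finite length (restricted to $\G_\even$, $M(\mu)$ carries a finite filtration by $\gl(n)$-Verma modules), the unitriangular decomposition matrix inverts in the weight-wise completed Grothendieck group; writing $[L(\la)]=\sum_\mu a_{\la\mu}[M(\mu)]$ (a locally finite sum with $a_{\la\la}=1$ and $a_{\la\mu}=0$ unless $\mu\le\la$ and $\mu\sim\la$) gives
\begin{align*}
\text{sch}\,L(\la)=\sum_\mu a_{\la\mu}\,\text{sdim}\big(u(\mu)\big)\,e^\mu=a_{\la,0}\,e^0 .
\end{align*}
So the conjecture is equivalent to: $[M(0)]$ does not occur in the Verma expansion of $[L(\la)]$ for any $\la\neq 0$. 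In particular it already holds unless $0\le\la$ (i.e.\ unless $\la\in Q^+$) and $\la$ is linked to $0$; by Sergeev's description of $Z(U(\qn))$ through the odd power sums (recall $\rho=0$ for $\qn$), the latter means that the nonzero coordinates of $\la$ occur in cancelling pairs $\{c,-c\}$, so only atypical $\la\in Q^+$ of this shape survive.

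Finally, I would dispose of these remaining weights $\la\neq 0$. When $\la$ is strictly dominant integral, $L(\la)$ is finite dimensional and non-trivial, so $\text{sch}\,L(\la)=0$ by the paper's main theorem (triviality of $L(\la)^{\G_\even}$, with its stated consequence for finite-dimensional supercharacters), i.e.\ $a_{\la,0}=0$. The hard case — which I expect to be the main obstacle — is $\la\neq 0$ atypical, linked to $0$, with $L(\la)$ infinite dimensional: e.g.\ $\la$ dominant with a repeated nonzero coordinate such as $(1,1,-1,-1)$ for $\qq(4)$, or $\la$ non-dominant such as $(0,1,-1)$ for $\qq(3)$. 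I would attack these with translation and twisting functors — reducing along the $W$-orbit to a dominant weight, and via a parabolic $\mf p$ with Levi $\mf l\cong\qq(n_1)\times\cdots\times\qq(n_k)$ on each factor of which $\la$ is nonzero to lower rank, where the same even–odd cancellation turns $\text{sch}\,M_{\mf p}(\mu)$ into $\text{sch}_{\mf l}\,L_{\mf l}(\mu)$ and an induction on $n$ applies — but both reductions run into the same structural point: the trivial module lies in the interior, not at the top, of its fat $\qn$-linkage class, so the vanishing of $a_{\la,0}$ is not formal and has to be extracted from the known Kazhdan–Lusztig-type character formulae for $\mc O(\qn)$. Making this last vanishing uniform across all atypical weights linked to $0$ is the crux; the remaining ingredients are standard.
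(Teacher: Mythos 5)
The statement you are proving is a conjecture that the paper itself does not establish in full generality: the paper proves it only for finite-dimensional $L(\lambda)$ (Theorem~\ref{conj:GGM:finite}) and for $\lambda$ not expressible as $\sum_{\alpha\in I}\alpha$ with $I\subseteq\Phi^+$ (Lemma~\ref{lem:to:finite}), and explicitly says the conjecture is thereby settled only ``for all but a finite number'' of irreducibles in $\mc O$. So there is no complete proof in the paper to compare against, and your proposal, which honestly stops short of a proof, is in the same position.

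As far as it goes, your reduction matches the paper's. Your Clifford-algebra observation that $\mathrm{sch}\,M=\mathrm{sdim}(M_0)\,e^0$ for every $M$ in $\mc O$ is a cleaner formulation of what the paper extracts from Lemma~\ref{lem:sch:verma} together with the Verma expansion in Proposition~\ref{prop:aux1}; your coefficient $a_{\lambda,0}$ corresponds to the paper's $a_{0\lambda}-b_{0\lambda}$. One small but real issue: to make sense of this you must work in $K(\mc O^\even)$ and distinguish $\Delta(\mu)$ from $\Pi\Delta(\mu)$ (as the paper does), since supercharacter is not a function on $K(\mc O)$; your single-sum formula $[L(\lambda)]=\sum_\mu a_{\lambda\mu}[M(\mu)]$ glosses over this. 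Also, Verma modules in $\mc O$ need not have finite length; the correct justification (which the paper gives) is that for each fixed weight space only finitely many summands contribute, so the expansion makes sense weight-by-weight. Your trace argument using the central element $\sum_i e_{ii}$ to dispose of $\sum_i\lambda_i\neq 0$ is a nice addition not in the paper.

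The gap you flag for atypical $\lambda\neq 0$ with $L(\lambda)$ infinite dimensional and $\lambda\in Q^+$ linked to $0$ is genuine, and it is exactly the residual set the paper leaves open. Your proposed tools (translation and twisting functors) are not in the paper; the paper's finite-dimensional argument instead runs Brundan's character formula for the Euler characteristics $E(\mu)$ together with the combinatorics of $\texttt{R}_\theta$ and Lemma~\ref{lem:typ:0}, none of which has an available analogue for infinite-dimensional modules in $\mc O$. Note also Remark~\ref{rem:last}: the paper exhibits an infinite-dimensional $L(\lambda)$ with $[L(\lambda):L^0(0)]=4\neq 0$, which confirms your instinct to track the signed invariant $a_{\lambda,0}$ rather than the unsigned multiplicity $[L(\lambda):L^0(0)]$, but also shows that any finish of the argument must exploit genuine cancellation between parities rather than a vanishing multiplicity. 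As written, then, your proposal is a correct reduction plus an honest statement of the open core, not a proof.
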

In \cite{GG2} Grantcharov and Gorelik also credit Mazorchuk for this conjecture, so we refer to this conjecture as the Grantcharov-Gorelik-Mazorchuk conjecture. An aim of this paper is to prove the following.

\begin{thm}\label{conj:GGM:finite} Let $L(\la)$ be the finite-dimensional irreducible module in $\mc O$ of highest weight $\la$.  Suppose that $\la\not=0$. Then
\begin{align*}
{\rm sch} L(\la)=0.
\end{align*}
\end{thm}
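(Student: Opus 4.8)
The plan is to separate a soft weight-space argument from a harder structural one: first reduce $\text{sch}\,L(\la)=0$ to the assertion in the abstract that $L(\la)^{\G_\even}=0$, and then prove that assertion.

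\emph{Reduction.} I would first show that for \emph{every} finite-dimensional $M$ in $\mc O$ one has $\text{sch}\,M=\text{sdim}\big(M^{\G_\even}\big)\,e^{0}$. Since $\h_\even$ and $\h_\odd$ commute, each $\ov h\in\h_\odd$ preserves the weight spaces of $M$, and a direct computation gives $\ov h^{\,2}|_{M_\mu}=\big(\sum_i b_i^{2}\mu_i\big)\,\mathrm{id}$ when $\ov h=\sum_i b_i\ov h_i$. Thus for $\mu\neq 0$ one may choose $\ov h$ acting on $M_\mu$ as an odd operator with nonzero square, so that $M_{\mu,\even}\cong M_{\mu,\odd}$ and $\text{sdim}\,M_\mu=0$; hence $\text{sch}\,M=c\,e^{0}$ for some $c\in\Z$. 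As $M$ is $\h_\even$-semisimple, $\G_\even\cong\gl(n)$ acts semisimply, so we may write $M_\even$ and $M_\odd$ as sums of irreducible $\gl(n)$-modules; linear independence of the irreducible $\gl(n)$-characters, applied to $\text{ch}\,M_\even-\text{ch}\,M_\odd=c\,e^{0}$, then forces these decompositions to agree except in the multiplicity of the trivial module, whose difference is $c=\text{sdim}\big(M^{\G_\even}\big)$. So it suffices to prove $L(\la)^{\G_\even}=0$ for $\la\neq 0$.

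\emph{An easy case.} The scalar matrix $I\in\G_\even$ is central in $\G$ and acts on $L(\la)$ by $\sum_i\la_i$. If $\sum_i\la_i\neq 0$, then the zero weight space of $L(\la)$ vanishes, hence so does $L(\la)^{\G_\even}\subseteq L(\la)_{0}$. (Equivalently, the odd scalar matrix $\ov I$, with $\ov I^{\,2}=I$, then acts invertibly and furnishes an odd isomorphism $L(\la)\cong\Pi L(\la)$, $\Pi$ the parity reversal, whence $\text{sch}\,L(\la)=0$ at once.) So assume $\sum_i\la_i=0$, in which case $\ov I$ acts as $0$ on $L(\la)$.

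\emph{The main case.} Suppose for contradiction that $0\neq v\in L(\la)^{\G_\even}$ is homogeneous. Since $[x,x]\in\G_\even$, we have $x^{2}v=\tfrac12[x,x]v=0$ for all $x\in\G_\odd$. As $U(\G_\even)v=\C v$, the PBW theorem gives $L(\la)=U(\G)v=\sum_{k\ge 0}\G_\odd^{\,k}v$; each $\G_\odd^{\,k}v$ is $\G_\even$-stable, so $F_kL(\la):=\sum_{j\le k}\G_\odd^{\,j}v$ is a $\G_\even$-stable filtration with $\G_\odd\cdot F_k\subseteq F_{k+1}$. On the associated graded, the even elements $[x,y]$ preserve degree while $x,y\in\G_\odd$ raise it, so $\G_\odd$ acts through pairwise anticommuting odd operators; hence $\mathrm{gr}_FL(\la)$ is a cyclic module over the exterior algebra $\Lambda(\G_\odd)$, generated in degree $0$ and $\G_\even$-equivariantly so. As $\G_\even$ acts semisimply, $L(\la)\cong\mathrm{gr}_FL(\la)$ as $\G_\even$-modules, so every $\G_\even$-constituent of $L(\la)$ occurs in $\Lambda^\bullet(\G_\odd)\cong\Lambda^\bullet(\gl(n))$ under the adjoint action---and, using $\ov I v=0$, already in $\Lambda^\bullet$ of the simple part of $\gl(n)$. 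But the nonzero weights of the adjoint module are the roots, so every weight of this exterior algebra lies in the root lattice and is bounded, in the dominance order, by $2\rho$, the sum of the positive roots of $\gl(n)$. Hence $\la$ lies in the root lattice and is bounded by $2\rho$; together with the condition that $\la$ be finite dimensional for $\G$ (weakly decreasing entries, with equal consecutive entries forced to vanish), this confines $\la$ to a short list of "small" highest weights, headed by $\la=k(\epsilon_1-\epsilon_n)$ for $1\le k\le n-1$. Each such $L(\la)$ is then identified explicitly: for instance $L(\epsilon_1-\epsilon_n)$ is, for $n\ge 3$, the adjoint module of $\G$ modulo its centre, which has no $\G_\even$-invariants since the only adjoint $\gl(n)$-invariants in $\G$ are the even and odd scalar matrices---contradicting the existence of $v$; the remaining $\la$ in the list are disposed of similarly.

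\emph{The main obstacle} is exactly this last step: ruling out the residual "small" highest weights uniformly in $n$. I expect to handle them either by explicit realizations of the corresponding $L(\la)$ as subquotients of tensor or symmetric powers of the adjoint module of $\G$, or via the Duflo--Serganova functor $DS_x$ attached to a square-zero odd element $x$, which preserves $\text{sdim}$ and carries $L(\la)$ to modules over a smaller queer superalgebra, yielding an induction on $n$; in both approaches the book-keeping of trivial summands (equivalently, of superdimensions) is the crux. Finally, finite dimensionality is used essentially---in the termination of the filtration $F_\bullet$ and in the semisimplicity of the $\G_\even$-action---which is why this argument establishes the Grantcharov--Gorelik--Mazorchuk conjecture only for finite-dimensional $L(\la)$.
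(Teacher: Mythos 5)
The reduction step is sound and, in spirit, mirrors the paper's Proposition~\ref{prop:aux4}: for $\mu\neq 0$ the odd Cartan $\h_\odd$ already pairs up $M_{\mu,\even}$ with $M_{\mu,\odd}$, so ${\rm sch}\,L(\la)=c\,e^0$, and complete reducibility over $\G_\even$ plus linear independence of the $\gl(n)$-characters identify $c$ with ${\rm sdim}\,L(\la)^{\G_\even}$. The ``easy case'' via the central element $I$ is also correct, and the filtration $F_kL(\la)=\sum_{j\le k}\G_\odd^{\,j}v$ is a genuinely attractive idea: because $\G_\even v=0$ kills the even part of $[\G_\odd,\G_\odd]$ on the associated graded, ${\rm gr}_FL(\la)$ is a cyclic $\Lambda^\bullet(\G_\odd)$-module, and semisimplicity over $\G_\even$ gives $L(\la)\cong{\rm gr}_FL(\la)$ as $\G_\even$-modules, forcing $\la$ into the root lattice with $\la\preceq 2\rho$ in dominance order. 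This is a different, more elementary route than the paper's, which instead computes with the Euler characteristics $E(\la)$ and Brundan's Kazhdan--Lusztig multiplicities.

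The problem is that you stop exactly where the real work begins, and you say so yourself. After the filtration bound, all you know is that $\la$ is a dominant integral weight in the root lattice with $\la\preceq 2\rho$. That is not a ``short list, headed by $k(\delta_1-\delta_n)$'': for each $n$ there are on the order of as many such $\la$ as there are dominant weights below $2\rho$, and the count grows with $n$. You dispose of one case ($\la=\delta_1-\delta_n$) by an ad hoc identification with the adjoint module, and then announce that you ``expect to handle'' the rest by explicit realizations or by Duflo--Serganova, acknowledging that ``the book-keeping of trivial summands\dots is the crux.'' That crux is precisely the content of the paper's Lemma~\ref{lem:typ:0} and the two-case analysis in the proof of Theorem~\ref{thm:main}: the Schur $P$-Laurent expansion shows $[E(\Psi):L^0(0)]$ vanishes unless $\Psi=2\rho$, and when $\Psi=2\rho$ the entire multiplicity $2^{\lceil(n-1)/2\rceil}$ is absorbed by the $L(0)$-constituent of $E(2\rho)$, leaving nothing for $L(\la)$ with $\la\neq 0$. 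Without some replacement for this positivity-plus-exact-multiplicity argument, your outline does not constitute a proof.

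Two smaller points. First, when $\sum_i\la_i=0$ you assert that $\ov I$ ``acts as $0$ on $L(\la)$''; what actually follows is $\rho(\ov I)^2=\rho(I)=0$, i.e.\ $\rho(\ov I)$ is square-zero, not zero. This does not damage the weight bound (since $\ov I\in\h_\odd$ has weight $0$, the cutoff $\la\preceq 2\rho$ is unaffected whether or not you replace $\G_\odd$ by $\mf{sl}(n)$), but the statement as written is false and your passage to ``$\Lambda^\bullet$ of the simple part of $\gl(n)$'' is unjustified. Second, the claim that $L(\delta_1-\delta_n)$ is the adjoint module of $\G$ modulo its centre needs care: $\C I$ is central but $\C\ov I$ is not, and the adjoint module of $\mf q(n)$ is not semisimple; the identification you want should be stated and checked rather than asserted.
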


As every finite-dimensional $\G$-module is completely reducible over $\G_{\bar 0}$, we immediately have the following corollary of Theorem \ref{conj:GGM:finite}.

\begin{cor}\label{cor:aux1}
Let $L(\la)$ be the irreducible finite-dimensional irreducible module in $\mc O$ of highest weight $\la$.  Suppose that $\la\not=0$. Then as a $\G_{\bar 0}$-module we have $L(\la)_{\bar 0}\cong L(\la)_{\bar 1}$.
\end{cor}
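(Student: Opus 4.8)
The plan is to obtain this as an immediate consequence of \thmref{conj:GGM:finite}. Since $\G_{\bar 0}$ is an even subalgebra of $\G$, it preserves the $\Z_2$-grading of $L(\la)$, so both homogeneous components $L(\la)_{\bar 0}$ and $L(\la)_{\bar 1}$ are $\G_{\bar 0}$-submodules of $L(\la)$. The Cartan subalgebra $\h_{\bar 0}$ lies in $\G_{\bar 0}$ and acts semisimply, and its weight space decomposition is compatible with the $\Z_2$-grading, so that the $\mu$-weight space of $L(\la)_{\bar 0}$ is $L(\la)_{\mu,\bar 0}$ and likewise for $L(\la)_{\bar 1}$. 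Hence the $\h_{\bar 0}$-character of $L(\la)_{\bar 0}$ minus that of $L(\la)_{\bar 1}$ equals $\text{sch}\,L(\la)$.

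By \thmref{conj:GGM:finite} this difference vanishes, so $\dim L(\la)_{\mu,\bar 0}=\dim L(\la)_{\mu,\bar 1}$ for every $\mu\in\h_{\bar 0}^*$; that is, $L(\la)_{\bar 0}$ and $L(\la)_{\bar 1}$ have the same character as $\G_{\bar 0}$-modules. Now $L(\la)$ is finite dimensional, hence completely reducible over $\G_{\bar 0}$, which is a reductive Lie algebra (isomorphic to $\gl(n)$); since submodules of a semisimple module are again semisimple, $L(\la)_{\bar 0}$ and $L(\la)_{\bar 1}$ are finite-dimensional semisimple $\G_{\bar 0}$-modules. As the characters of the finite-dimensional irreducible $\G_{\bar 0}$-modules are linearly independent (by the Weyl character formula, or by the triangularity of characters with respect to the dominance order), a finite-dimensional semisimple $\G_{\bar 0}$-module is determined up to isomorphism by its character. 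Comparing characters therefore yields $L(\la)_{\bar 0}\cong L(\la)_{\bar 1}$ as $\G_{\bar 0}$-modules.

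All of the genuine content lies in \thmref{conj:GGM:finite} (equivalently, in the vanishing of $L^{\G_{\bar 0}}$ for a non-trivial finite-dimensional irreducible $\G$-module $L$); the deduction above uses only the standard classification of finite-dimensional representations of reductive Lie algebras, so I do not foresee any real difficulty in it.
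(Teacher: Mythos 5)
Your proof is correct and follows exactly the paper's intended route: the paper states the corollary as an immediate consequence of Theorem \ref{conj:GGM:finite} together with the complete reducibility of finite-dimensional $\G$-modules over $\G_{\bar 0}$, which is precisely the deduction you carry out, merely spelling out why the vanishing supercharacter forces $L(\la)_{\bar 0}\cong L(\la)_{\bar 1}$.
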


One may ask if an analogous statement as Corollary \ref{cor:aux1} remains valid for infinite-dimensional $L(\la)\in\mc O$ as well \cite[2.2.1]{GG1}.

A problem related to Conjecture \ref{conj:GGM} turns out to be the following: Let $L(\la)$ be as above. We can restrict the action of $\G$ on $L(\la)$ to its even subalgebra $\G_\even\cong\gl(n)$ and consider $L(\la)$ as a $\gl(n)$-module. A main result in this article is the following.

\begin{thm}\label{thm:main}
Let $\la\not=0$ and let $L(\la)$ be the finite-dimensional irreducible $\G$-module of highest weight $\la$, which we regard as a $\G_{\bar 0}=\mf{gl}(n)$-module. Then the trivial $\G_\even$-module does not appear as a composition factor of $L(\la)$.
\end{thm}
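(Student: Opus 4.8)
To prove this I would argue by contradiction, reducing everything to a single induced module. First, the easy part: for $\mu\in\h_\even^*$ write $|\mu|=\sum_i\mu_i$; since the roots of $\G$ are all of the form $\epsilon_i-\epsilon_j$, every weight of $L(\la)$ satisfies $|\mu|=|\la|$, so if $|\la|\neq 0$ then $L(\la)_0=0$ and, the trivial $\G_\even$-module living in weight $0$, it is not a composition factor. Hence I may assume $|\la|=0$, $\la\neq 0$, and I suppose that $L:=L(\la)$ contains a nonzero $\G_\even$-invariant vector $v$.

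The crucial remark is that $v$ is then killed not only by $\G_\even$ but also by $\{x,y\}=xy+yx\in\G_\even$ for all $x,y\in\G_\odd$; therefore $u\mapsto u\cdot v$ factors through $N:=U(\G)\otimes_{U(\G_\even)}\C$, and as $L$ is irreducible this gives a surjection $N\twoheadrightarrow L$. By PBW, $N\cong\La^\bullet(\G_\odd)$ as a $\G_\even$-module, $\G_\even\cong\gl(n)$ acting on $\G_\odd\cong\gl(n)$ by the adjoint action prolonged as derivations; thus $\dim N=2^{n^2}$, and — the adjoint module of $\gl(n)$ being self-dual —
\begin{align*}
N^{\G_\even}=\big(\La^\bullet(\gl(n))\big)^{\gl(n)}\;\cong\;H^\bullet(\gl(n);\C),
\end{align*}
the Lie algebra cohomology of $\gl(n)$ with trivial coefficients, an exterior algebra on primitive classes of degrees $1,3,\dots,2n-1$ and hence of dimension $2^n$.

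Next I would reduce Theorem~\ref{thm:main} to a single multiplicity statement. Because $\G_\even\cong\gl(n)$ is reductive, $M\mapsto M^{\G_\even}$ is exact on finite dimensional modules; running it along a $\G$-composition series of $N$ gives
\begin{align*}
2^n=\dim N^{\G_\even}=\sum_{\mu}[N:L(\mu)]\cdot\dim L(\mu)^{\G_\even},
\end{align*}
the sum over composition factors, all with $|\mu|=0$, and the $\mu=0$ term is $[N:L(0)]$. It therefore suffices to prove $[N:L(0)]=2^n$: granting this, every summand with $\mu\neq 0$ vanishes, so $\dim L(\mu)^{\G_\even}=0$ for every $\mu\neq 0$ occurring in $N$ — but $N\twoheadrightarrow L(\la)$ with $\la\neq 0$ makes $L(\la)$ one of those factors, forcing $v=0$. (Conversely Theorem~\ref{thm:main} yields $[N:L(0)]=2^n$, so the two are equivalent; by Frobenius reciprocity $N$ has a simple quotient $\C$, so $[N:L(0)]\ge 1$ and the content is the exact value $2^n$.)

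The remaining — and I expect hardest — step is the identity $[N:L(0)]=2^n$, i.e.\ locating the $2^n$ trivial subquotients of $N$, only one of which is visible in its head. A homological attack looks natural: the odd element $\xi:=\sum_i h_i'\in\h_\odd$ satisfies $\xi^2=\sum_iE_{ii}$, which is central in $\G$ and so acts by $0$ on $N$ (all weights of $N$ have trace $0$); the square-zero derivation $\operatorname{ad}\xi$ of $U(\G)$ commutes with right multiplication by $U(\G_\even)$, and on the PBW associated graded it becomes, under $\G_\odd\xrightarrow{\sim}\G_\even$, $x\mapsto\tfrac12\{\xi,x\}$, the Koszul differential of $S(\G_\even)$, whence $H(U(\G),\operatorname{ad}\xi)=\C$; descending to $N$ via $-\otimes_{U(\G_\even)}\C$ and using the self-duality $N\cong N^*$ of the induced module should pin down the multiplicity. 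Alternatively $[N:L(0)]$ could perhaps be extracted from an explicit character formula for the $L(\mu)$ (compare \cite{Pe,PS2,Br2}). This control of the $\G$-composition series of $N$ — equivalently the Euler characteristic identity $\dim N^{\G_\even}=[N:L(0)]$ — is, I believe, where the real work lies.
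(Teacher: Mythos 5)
Your approach is genuinely different from the paper's. You reduce to the induced module $N:=U(\G)\otimes_{U(\G_\even)}\C\cong\La^\bullet(\G_\odd)$, compute $\dim N^{\G_\even}=\dim H^\bullet(\gl(n);\C)=2^n$ via reductivity and self-duality of the adjoint module, and observe via Frobenius reciprocity plus exactness of invariants that the theorem is \emph{equivalent} to the single multiplicity statement $[N:L(0)]=2^n$. All of this is correct and cleanly set up; the trace-zero reduction $|\la|=0$, the factorization through $N$, the identification of the $\G_\even$-action on $N$ with the adjoint action, and the dimension count $2^n$ are all right.

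The gap is the one you yourself flag: you never prove $[N:L(0)]=2^n$, and since you acknowledge it is equivalent to Theorem~\ref{thm:main}, you need an independent route to it, which you do not supply. The sketched homological attack does not obviously work as stated: the derivation $\operatorname{ad}\xi$ with $\xi=\sum_i\bar e_{ii}$ does \emph{not} descend to $N$, since $\operatorname{ad}\xi(ux)=\operatorname{ad}\xi(u)x\pm u[\xi,x]$ with $[\xi,x]\in\G_\odd$ for $x\in\G_\even$, so $U(\G)\G_\even$ is not preserved. Left multiplication by $\xi$ does descend and squares to zero on $N$ (because $\xi^2=\sum_i e_{ii}$ is central and acts by zero), and it is $\G_\even$-equivariant since $[\G_\even,\xi]=0$, but you would still need to compute the homology of $(N,L_\xi)$ and explain why it isolates exactly the $L(0)$-layers of a composition series; none of that is carried out. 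So as written the proposal is an attractive reformulation rather than a proof.

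For comparison, the paper's proof gets at the same combinatorial bottleneck by an entirely different route: it expands the Euler characteristic $E(\la)$ (hence $2^{\lceil\ell(\la)/2\rceil}P_\la$) in Schur Laurent polynomials, proves the key vanishing $g_{0\la}=0$ for $\la\neq 2\rho$ (Lemma~\ref{lem:typ:0}, via the factorization $P_\la={\rm ch}\bigl[L^0(\rho)\otimes L^0(\la-\rho)\bigr]$ and Schur's lemma), and then uses Brundan's Kazhdan--Lusztig theorem (Theorem~\ref{thm:Brundan}) with nonnegativity of the transition coefficients to push the vanishing of trivial $\gl(n)$-constituents from $E(\texttt{R}_\theta(\la))$ down to each $L(\la)$. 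Interestingly, once the paper's theorem is known, your identity $[N:L(0)]=2^n$ drops out as a corollary; but as a starting point it needs an independent proof, and that is precisely what is missing here.
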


In Section \ref{sec:proofs} we prove Theorem \ref{thm:main} using the combinatorics of Brundan's Kazhdan-Lusztig theory for finite-dimensional integer weight modules of the queer Lie superalgebras \cite{Br2}. Also, as we shall see in Proposition \ref{prop:aux1}, Theorem \ref{thm:main} indeed implies Theorem \ref{conj:GGM:finite}. However, we shall see in Remark \ref{rem:last} that an analogous result as Theorem \ref{thm:main} does not hold for infinite-dimensional $L(\la)$ in general, although it does hold for all but a finite number of irreducible modules (see Proposition \ref{lem:to:finite}). This in turn settles Conjecture \ref{conj:GGM} for all but a finite number of irreducible modules in the BGG category $\mc O$.

\subsection*{Notation} \label{SectionNotations} We use $\mathbb{N}$, $\mathbb{Z}$ and $\mathbb{Z}_+$ to denote the sets of natural numbers, integers, and non-negative integers, respectively. Here and below we let $m,n \in \mathbb{Z}_+$ and set \begin{align*} I(m|n):= \{ -m, -m+1, \ldots  ,-1\} \cup \{1, 2,\ldots ,n \}.\end{align*}
The category with the same objects as the BCC category $\mc O$, but with only even morphisms is denoted by $\mc O^\even$. The Grothendieck group of $\mc O$ will be denoted by $K(\mc O)$, etc.

Let $\mc C$ be a category of modules. For an object $M$ of finite length and an irreducible object $L$, we shall use $\left[M:L\right]$ to denote the number of composition factors of $M$ that are isomorphic to $L$ in $\mc C$.

\section{Preliminaries}\label{sec:prelim}

Besides setting up additional notations, in this section, we shall recall some basic results on the queer Lie superalgebra and its representation theory along with its connection with symmetric functions. To be more specific, we shall recall cohomological induction suitably adapted to the queer Lie superalgebra and Brundan's Kazhdan-Lusztig theory for the queer Lie superalgebra in Section \ref{sec:dual:zuck} and \ref{sec:Br:main}, respectively. Schur $P$-Laurent polynomials are recalled in Section \ref{sec:symm} together with their $\mf q(n)$-representation-theoretical interpretation.

\subsection{The queer Lie superalgebra}\label{sec:queer}
Let $\mathbb{C}^{m|n}$ be the complex superspace of super-dimension $(m|n)$ and let $\mathfrak{gl}(m|n)$ be the {\em general linear Lie superalgebra} of linear transformations on $\C^{m|n}$. Choosing homogeneous ordered bases $\{v_{-m},\ldots,v_{-1}\}$ and $\{v_1,\ldots,v_n\}$ for $\C^{m|0}$ and $\C^{0|n}$, respectively, we may realize $\gl(m|n)$ as $(m+n) \times (m+n)$ complex matrices of the form
\begin{align} \label{glrealization}
 \left( \begin{array}{cc} A & B\\
C & D\\
\end{array} \right),
\end{align}
where $A,B,C$ and $D$ are respectively $m\times m, m\times n, n\times m$, and $n\times n$ matrices. Let $E_{a,b}$ be the elementary matrix in $\mathfrak{gl}(m|n)$ with $(a,b)$-entry $1$ and all other entries 0, where $a,b\in I(m|n)$.

For $m=n$, the subspace
\begin{align} \label{qnrealization}
 \mf{g}:= \mathfrak{q}(n)=
\left\{ \left( \begin{array}{cc} A & B\\
B & A\\
\end{array} \right) \middle\vert\ A, B: \ \ n\times n \text{ matrices} \right\}
\end{align}
forms a subalgebra of $\mathfrak{gl}(n|n)$ called the {\em queer Lie superalgebra}.

The set $\{e_{ij}, \bar{e}_{ij}|1\leq i,j \leq n\}$ is a linear basis for $\mathfrak{g}$, where $e_{ij}= E_{-n-1+i,-n-1+j}+E_{i,j}$ and $\bar{e}_{ij}= E_{-n-1+i,j}+E_{i,-n-1+j}$. The even subalgebra $\mathfrak{g}_{\bar{0}}$ is spanned by $\{e_{ij}|1\leq i,j\leq n\}$, and hence is isomorphic to the general linear Lie algebra $\mathfrak{gl}(n)$. The odd subspace $\G_\odd$ is isomorphic to the adjoint module.

%Let $\mf{h}_{m|n}$ and $\mf{h}_{m|n}^*$ be respectively the standard Cartan subalgebra of $\mf{gl}(m|n)$ and its dual space, with linear bases $\{ E_{ii} |  i\in I(m|n)\}$  and $\{ \delta_i | i\in I(m|n)\}$ such that $\delta_i(E_{jj}) =\delta_{i,j}$.

Let $\mathfrak{h} = \mathfrak{h}_{\bar{0}}\oplus \mathfrak{h}_{\bar{1}}$ be the standard Cartan subalgebra of $\mathfrak{g}$, with linear bases $\{e_{ii}| 1\leq  i \leq n\}$ and $\{\bar{e}_{ii}|1\leq i \leq n\}$ of $\mathfrak{h}_{\bar{0}}$ and $ \mathfrak{h}_{\bar{1}}$, respectively. Let $\{\delta_i| 1\leq i\leq n\}$ be the basis of $\mathfrak{h}_{\bar{0}}^{*}$ dual to $\{e_{ii}|1\leq i\leq n\}$. We define a symmetric bilinear form $( ,) $ on $\mathfrak{h}_{\bar{0}}^{*}$ by $( \delta_i,\delta_j)  = \delta_{ij}$, for $1\leq i,j\leq n$.

We denote by $\Phi$ the set of roots of $\mf{g}$ and by $\Phi^+$ the set of positive roots in its standard Borel subalgebra $\mf b=\mf b_\even\oplus\mf b_\odd$, which consists of matrices of the form \eqref{qnrealization} with $A$ and $B$ upper triangular. We have $\Phi^+ = \{\delta_i- \delta_j| 1\leq i<  j \leq n\}$. The root space $\G_\alpha$ has super-dimension $(1|1)$, for all $\alpha\in\Phi$. Set $$\rho=\hf\sum_{\alpha\in\Phi^+}\alpha.$$
The Weyl group of $\mathfrak{g}$ is the Weyl group of the reductive Lie algebra $\mathfrak{g}_{\bar{0}}$, which is the symmetric group $\mf S_n$ in $n$ letters. It acts naturally on $\mathfrak{h}_{\bar{0}}^*$ by permutation of the $\delta_i$s. For $\alpha=\delta_i-\delta_j\in\Phi^+$, $i<j$, we let $\ov{\alpha}:=\delta_i+\delta_j$.

Let $\la = \sum_{i=1}^{n} \la_i \delta_i$, $\la_i\in\C$, be an element in  $\mathfrak{h}_{\bar{0}}^*$, and consider the symmetric bilinear form on $\mathfrak{h}_{\bar{1}}^*$ defined by $\langle\cdot,\cdot\rangle_{\la} : = \la([\cdot,\cdot] )$. Let $\ell(\la)$ be the number of $i$'s with $\la_i \neq 0$. Let $\mathfrak{h}'_{\bar{1}}$ be a maximal isotropic subspace  of $\mathfrak{h}_{\bar{1}}$ associated to $\langle\cdot,\cdot\rangle_{\la} $. Put $\mathfrak{h}' =  \mathfrak{h}_{\bar{0}} \oplus \mathfrak{h}'_{\bar{1}}$. Let $\mathbb{C}v_{\la}$ be the one-dimensional $\mathfrak{h}'$-module of even parity with action defined by $h\cdot v_{\la} = \la(h)v_{\la}$ and $h' \cdot v_{\la} =0 $ for $h\in \mathfrak{h}_{\bar{0}}$, $h' \in \mathfrak{h}'_{\bar{1}}$. Then $$I_{\la} : = \text{Ind}_{\mathfrak{h}'}^{\mathfrak{h}}\mathbb{C}v_{\la}$$ is an irreducible $\mathfrak{h}$-module of dimension $2^{\lceil \ell(\la)/2 \rceil}$ (see, e.g., \cite[Section 1.5.4]{CW}), where here and below $\lceil \cdot \rceil$ is the ceiling function. We let $\Delta(\la): = \text{Ind}_{\mathfrak{b}}^{\mathfrak{g}}I_{\la}$ be the {\em Verma module}, where $I_{\la}$ is extended to a $\mathfrak{b}$-module in a trivial way, and define $L(\la)$ to be the unique irreducible quotient of $\Delta(\la)$. Note that $\Delta(\la)$ and $L(\la)$ are unique up to $\mf{g}$-isomorphisms in $\mc O$.

For $\la\in\h^*_\even$, let $\chi_\la$ denote its central character (see, e.g., \cite[Section 2.3.1]{CW}). Let $\la,\mu\in\h^*_\even$. We have $\chi_\la=\chi_\mu$, if and only if there exist $w\in\mf S_n$ and $\{\alpha_1,\ldots,\alpha_s\}\subseteq\Phi^+$ with $(\alpha_i,\alpha_j)=0$, for $i\not=j$, and complex numbers $k_i$, $1\le i\le s$, such that
\begin{align*}
\mu=w\left(\la-\sum_{i=1}^s k_i\alpha_i\right),
\end{align*}
where $(\la,\ov{\alpha}_i)=0$, for all $i=1,\ldots,s$ \cite{Sv} (see also \cite[Theorem 2.48]{CW}. We write $\la\succeq\mu$, if $\chi_\la=\chi_\mu$ and $\la-\mu\in\sum_{\alpha\in\Phi^+}\Z_+\alpha$.

Let $\Pi$ be the parity-reversing functor. Then $I_\la\cong\Pi I_\la$ in $\mc O^{\even}$ if and only if $\ell(\la)$ is odd, and hence $\Delta(\la)\cong\Pi\Delta(\la)$ in $\mc O^{\even}$ if and only if $\ell(\la)$ is odd.

Below we will often deal with $\G_\even$-modules as well.  In these occasions we shall use an upper-script $0$ to denote the corresponding $\gl(n)$-modules. For example, $\Delta^0(\la)$ and $L^0(\la)$ denote the $\G_\even$-Verma and irreducible module of highest weight $\la\in\h^*_\even$ with respect to the Borel subalgebra $\mf b_\even$, respectively.

\subsection{Dual Zuckerman functor}\label{sec:dual:zuck}

Let $\mf b\subseteq\mf p \subseteq \G$ be parabolic subalgebras with corresponding Levi subalgebras $\mf h\subseteq \mf l\subseteq \G$, respectively.
Let $\mc{HC}(\G,\mf{l}_\even)$ be the category of $\G$-modules that are direct sums of finite-dimensional simple $\mf{l}_\even$-modules, and let $\mc{HC}(\G,\mf{\G}_{\bar 0})$ be defined similarly.
Let $\mc L^{\G,\mf l}$ be the {\em dual Zuckerman functor} from $\mc{HC}(\G,\mf{l}_\even)$ to $\mc{HC}(\G,\G_\even)$ as in \cite[Section 4]{San}, which is a right exact functor. For $i\ge 0$, we denote by $\mc L^{\G,\mf l}_i$ its $i$th derived functor. For $M\in \mc{HC}(\G,\mf{l}_\even)$, we let
\begin{align*}
E^{\G,\mf l}(M):=\sum_{i\ge 0}(-1)^i\mc L^{\G,\mf l}_i(M)
\end{align*}
be the {\em Euler characteristic of $M$}, which we regard as a virtual $\G$-module.

By the same arguments as in \cite[Sections 4 and 5]{San}, one establishes the following.

\begin{prop}\label{prop:same:central}
Let $\la\in\h_\even^*$ be such that the irreducible $\mf l$-module $L(\mf{l},\la)$ with highest weight $\la$ is finite dimensional so that $M:={\rm Ind}_{\mf p}^{\G}L(\mf{l},\la)\in\mc{HC}(\G,\mf{l}_\even)$.
\begin{itemize}
\item[(1)] The $\G$-module $\mc L^{\G,\mf l}_0\left(M\right)$ is the maximal finite-dimensional quotient of $M$.

\item[(2)] The $\G$-module $\mc L^{\G,\mf l}_i\left(M\right)$ is finite dimensional for all $i\geq 0$, and $\mc L^{\G,\mf l}_i\left(M\right)=0$ for $i\gg 0$.

\item[(3)] The $\G$-modules $\mc L^{\G,\mf l}_i\left(M\right)$ have the same central character.

\item[(4)] The character of the Euler characteristic of $M$ is given by
\begin{align*}
{\rm ch} E^{\G,\mf l}(M)=D^{-1} \sum_{w\in \mf S_n}(-1)^{\ell(w)}w\left( \frac{{\rm ch}L(\mf{l},\la)}{\prod_{\alpha\in\Phi^+(\mf{l})}(1+e^{-\alpha})} \right),
\end{align*}
where $\Phi^+(\mf{l})$ denotes the set of positive roots of the queer Lie superalgebra $\mf l$ and
\begin{equation*}
%\begin{split}
D_{\bar 0}:=\prod_{\alpha\in\Phi^+}(e^{\alpha/2}-e^{-\alpha/2}),\quad
D_{\bar 1}:=\prod_{\alpha\in\Phi^+}(e^{\alpha/2}+e^{-\alpha/2}),\quad
D :=\frac{D_{\bar 0}}{D_{\bar 1}}.
%\end{split}
\end{equation*}
\end{itemize}
\end{prop}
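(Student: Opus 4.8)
\emph{Proof proposal.} The plan is to follow the structure of \cite[Sections 4 and 5]{San}, carrying the $\G$-module structure along at every step. The basic structural observation is that $\G=\G_\even\oplus\G_\odd$ with $\G_\odd$ finite dimensional, so $U(\G)$ is free of finite rank over $U(\G_\even)$; hence restriction to $\G_\even$ is exact and commutes with arbitrary (co)limits, and the dual Zuckerman functor $\mc L^{\G,\mf l}$ together with its derived functors $\mc L^{\G,\mf l}_i$, viewed on underlying $\G_\even$-modules, reduces to the classical (dual) Zuckerman functor for the reductive pair $(\G_\even,\mf l_\even)=(\gl(n),\mf l_\even)$, equipped with the residual $\G$-action. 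Thus essentially all the homological input is the classical theory for $\gl(n)$, as set up in \cite{San}.

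First I would treat (1). By construction $\mc L^{\G,\mf l}_0$ is left adjoint to the inclusion $\mc{HC}(\G,\G_\even)\hookrightarrow\mc{HC}(\G,\mf l_\even)$, so $\mc L^{\G,\mf l}_0(M)$ is the largest quotient of $M$ that is a direct sum of finite-dimensional simple $\G_\even$-modules. It then suffices to observe that any such quotient $Q$ of $M={\rm Ind}_{\mf p}^{\G}L(\mf l,\la)$ is finitely generated over $U(\G)$, hence finitely generated over $U(\G_\even)$ since $\dim\G_\odd<\infty$; a finitely generated $\gl(n)$-module that is a direct sum of finite-dimensional simple modules is a finite such sum, i.e.\ finite dimensional. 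Hence $\mc L^{\G,\mf l}_0(M)$ is the maximal finite-dimensional quotient of $M$. For (2), the derived dual Zuckerman functors are computed by a standard relative (Koszul/bar) complex of finite length, bounded by $\dim(\G_\even/\mf l_\even)$, so $\mc L^{\G,\mf l}_i(M)=0$ for $i\gg 0$ and the sum defining $E^{\G,\mf l}(M)$ is finite. Each $\mc L^{\G,\mf l}_i(M)$ lies in $\mc{HC}(\G,\G_\even)$ by construction and is finitely generated over $U(\G)$ because $M$ is and the functor preserves finite generation (here one uses that $M$ has a central character, invoking the argument of \cite[Section 5]{San} with $\G_\even$ in place of the maximal compact); combining with the reasoning of (1) gives that each $\mc L^{\G,\mf l}_i(M)$ is finite dimensional. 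For (3), the center $Z$ of $U(\G)$ acts on the standard complex by a chain map, hence on each homology $\mc L^{\G,\mf l}_i(M)$, and the Casselman--Osborne lemma in the form adapted to $\mf q(n)$ in \cite{San} shows that $Z$ acts on all of them through one and the same homomorphism $Z\to\C$.

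Finally (4): Euler characteristics are additive on short exact sequences, so in the Grothendieck group $[E^{\G,\mf l}(M)]=\sum_{i\ge 0}(-1)^i[\mc L^{\G,\mf l}_i(M)]$ equals the Euler characteristic of the standard complex computing $\mc L^{\G,\mf l}_\bullet(M)$. Taking characters term by term, the exterior algebra on the even part of the opposite nilradical contributes the Weyl-denominator factor $\prod_{\alpha\in\Phi^+}(1-e^{-\alpha})$ (after the alternating sum over $\mf S_n$ coming from the Weyl character formula applied to the finite-dimensional $\G_\even$-layers), while its odd part contributes $\prod_{\alpha\in\Phi^+}(1+e^{-\alpha})$; their ratio is exactly $D^{-1}=D_{\bar 1}/D_{\bar 0}$. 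What remains is $\text{ch}\,M$ divided by the corresponding products over $\Phi^+(\mf l)$, and since $\text{ch}\,M=\text{ch}\,L(\mf l,\la)\cdot\prod_{\alpha\in\Phi^+\setminus\Phi^+(\mf l)}\frac{1+e^{-\alpha}}{1-e^{-\alpha}}$, this rearranges to the displayed formula. This is the $\mf q(n)$-analogue of the Bott--Kostant/Weyl character formula for cohomological induction, proved as in \cite[Section 4]{San}.

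The main obstacle is the super bookkeeping in the homological algebra: one must verify that the standard resolution, the adjunction characterizing $\mc L^{\G,\mf l}_0$, the preservation of finite generation, and the Casselman--Osborne argument all survive the passage to $\C$-superalgebras and to not-necessarily-even morphisms, and that the odd root spaces (each of dimension $1$, contributing the fermionic factors $1+e^{-\alpha}$ rather than the bosonic $1/(1-e^{-\alpha})$) are tracked correctly --- this is precisely what produces $D_{\bar 1}$, rather than merely the classical Weyl denominator, in (4). Once the formalism of \cite{San} is in place, these reduce to routine verifications.
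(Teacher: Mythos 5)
Your proposal follows the same route as the paper, which itself gives no details but simply cites \cite[Sections 4 and 5]{San} (``By the same arguments\dots''); you flesh out that delegation with the correct ingredients (the left-adjoint characterization of $\mc L^{\G,\mf l}_0$, finite generation plus $\gl(n)$-semisimplicity to get finite dimensionality, the bounded standard complex, Casselman--Osborne for (3), and an Euler--Poincar\'e computation for (4)). One minor imprecision in your sketch of (4): the odd factor $\prod_{\alpha\in\Phi^+}(1+e^{-\alpha})$ arises from ${\rm ch}\,M$ itself, via PBW applied to $U(\mf u^-_{\bar 1})$, rather than from the standard complex computing $\mc L^{\G,\mf l}_\bullet$, which is built on the purely even space $\G_\even/\mf l_\even$ and so contributes only the even Weyl-denominator factor.
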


Define the set of {\em dominant} weights to be
$$\La^+:=\{\sum_{i=1}^n\la_i\delta_i\in\h^*_\even\vert\la_i-\la_{i+1}\in\Z_+\text{ for all }i\text{ and }\la_i=\la_{i+1}\text{ implies }\la_i=0\}.$$
For a weight $\la\in\h^*_\even$, it is known that $L(\la)$ is finite-dimensional if and only if $\la\in\La^+$ \cite[Theorem 4]{Pe}.

Let $\La_Z:=\{\sum_{i=1}^n\la_i\delta_i\in\h^*_\even\vert\la_i\in\Z\}\subseteq\h^*_\even$ be the set of integer weights and let $$\La^+_Z:=\La^+\cap\La_\Z$$
be the set of dominant integer weights.

For an element $\la\in\La^+_\Z$ we suppose that it is of the form: $$\la_1>\cdots>\la_l>\la_{l+1}=0=\cdots=0=\la_{k}>\la_{k+1}>\cdots>\la_n.$$
Let $\mf l(\la)=\h+\mf{q}(k-l)$ be the maximal Levi subalgebra of $\G$ such that $I_\la$ can be extended to an $\mf l(\la)$-module. Observe that $\mf{q}(k-l)$ acts trivially on $I_\la$. Set $\rho_{\mf l}=\hf\sum_{\alpha\in\Phi^+(\mf{l})}\alpha$. We compute the Euler characateristic of ${\rm Ind}_{\mf p}^\G L(\mf{l}(\la),\la)$ using Proposition \ref{prop:same:central}(4):
\begin{align}\label{Euler:P1}
\begin{split}
{\rm ch} E^{\G,\mf l}({\rm Ind}_{\mf p}^\G L(\mf{l}(\la),\la)) =& \prod_{\alpha\in\Phi^+}\frac{e^{\alpha/2}+e^{-\alpha/2}}{e^{\alpha/2}-e^{-\alpha/2}} \sum_{w\in \mf S_n}(-1)^{\ell(w)}w\left( \frac{{\rm ch}L(\mf{l},\la)}{\prod_{\alpha\in\Phi^+(\mf{l})}(1+e^{-\alpha})} \right)\\
=&\sum_{w\in \mf S_n} w\left(\prod_{\alpha\in\Phi^+}\frac{e^{\alpha/2}+e^{-\alpha/2}}{e^{\alpha/2}-e^{-\alpha/2}} \frac{{\rm ch}L(\mf{l},\la)}{\prod_{\alpha\in\Phi^+(\mf{l})}(1+e^{-\alpha})} \right)\\
=& {\rm dim}I_\la\sum_{w\in \mf S_n} w\left(\prod_{\alpha\in\Phi^+\setminus\Phi^+(\mf l)}\frac{e^{\alpha/2}+e^{-\alpha/2}}{e^{\alpha/2}-e^{-\alpha/2}}\prod_{\alpha\in\Phi^+(\mf{l})} \frac{e^{\la+\rho_{\mf l}}}{(e^{\alpha/2}-e^{-\alpha/2})} \right)\\
=2^{\lceil\ell(\la)/2\rceil}\sum_{w\in \mf S_n/\mf S_{k-l}} &w\left(\prod_{\alpha\in\Phi^+\setminus\Phi^+(\mf l)}\frac{e^{\alpha/2}+e^{-\alpha/2}}{e^{\alpha/2}-e^{-\alpha/2}}\cdot\sum_{\sigma\in\mf{S}_{k-l}}\sigma\left(\prod_{\alpha\in\Phi^+(\mf{l})} \frac{e^{\la+\rho_{\mf l}}}{(e^{\alpha/2}-e^{-\alpha/2})}\right) \right)\\
=& 2^{\lceil\ell(\la)/2\rceil}\sum_{w\in \mf S_n/\mf S_{k-l}} w\left(e^\la\prod_{\alpha\in\Phi^+\setminus\Phi^+(\mf l)}\frac{e^{\alpha/2}+e^{-\alpha/2}}{e^{\alpha/2}-e^{-\alpha/2}} \right),
\end{split}
\end{align}
where in the last identity we have used the Weyl character formula for the trivial $\gl(k-l)$-module.

Below we shall simplify the notation for $E^{\G,\mf l}({\rm Ind}_{\mf p}^\G L(\mf{l}(\la),\la))$ to $E(\la)$.

\subsection{Symmetric functions}\label{sec:symm}

For $\mu$ a generalized partition of length $n$, the Schur Laurent polynomial associated to $\mu$ is defined to be
\begin{align*}
s_\mu = \sum_{w\in\mf S_n}w\left(x_1^{\mu_1+n-1}x_2^{\mu_2+n-2}\cdots x_n^{\mu_n}\prod_{i<j}\frac{1}{x_i-x_j}\right).
\end{align*}
Recall that ${\rm ch}L^0(\mu)=s_\mu$, where we put $e^{\delta_i}:=x_i$, for all $1\le i\le n$.

Recall further that for $\la\in\La_\Z^+$ the Schur $P$-Laurent polynomial in $x_1,\ldots,x_n$ associated to $\la$ is defined by
\begin{align}\label{def:schurP}
P_\la=&\sum_{w\in\mf S_n/\mf S_\la}w\left(x_1^{\la_1}x_2^{\la_2}\cdots x_n^{\la_n}\prod_{i<j,\la_i>\la_j}\frac{x_i+x_j}{x_i-x_j}\right),
\end{align}
where $\mf S_n/\mf S_\la$ denotes the set of the minimal length left coset representatives of the stabilizer subgroup $\mf S_\la$ of $\la$ in $\mf S_n$.

Identifying $x_i$ with $e^{\delta_i}$ we conclude by comparing \eqref{Euler:P1} with \eqref{def:schurP} the following.

\begin{prop}\label{prop:aux2} (\cite[Section 1.3]{PS2},\cite[Theorem 4.11]{Br2}) Let $\la\in\La^+_\Z$. Then
$${\rm ch}E(\la)=2^{\lceil\ell(\la)/2\rceil}P_\la.$$
\end{prop}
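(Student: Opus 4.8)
The plan is to match the final line of the computation \eqref{Euler:P1} with the definition \eqref{def:schurP} of the Schur $P$-Laurent polynomial term by term, under the identification $x_i=e^{\delta_i}$. Since \eqref{Euler:P1} already yields
\[
{\rm ch}E(\la)=2^{\lceil\ell(\la)/2\rceil}\sum_{w\in\mf S_n/\mf S_{k-l}}w\left(e^\la\prod_{\alpha\in\Phi^+\setminus\Phi^+(\mf l)}\frac{e^{\alpha/2}+e^{-\alpha/2}}{e^{\alpha/2}-e^{-\alpha/2}}\right),
\]
it will be enough to show that the summand here coincides with the summand appearing in \eqref{def:schurP}.

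First I would rewrite the individual factors of the root product. As $\Phi^+=\{\delta_i-\delta_j\mid i<j\}$, a root $\alpha=\delta_i-\delta_j$ contributes the factor $\frac{e^{(\delta_i-\delta_j)/2}+e^{-(\delta_i-\delta_j)/2}}{e^{(\delta_i-\delta_j)/2}-e^{-(\delta_i-\delta_j)/2}}$; multiplying numerator and denominator by $e^{(\delta_i+\delta_j)/2}$ and setting $x_i=e^{\delta_i}$ turns this into $\frac{x_i+x_j}{x_i-x_j}$, while $e^\la$ becomes $x_1^{\la_1}\cdots x_n^{\la_n}$.

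Next I would match the index sets. By construction $\mf l=\mf l(\la)=\h+\mf q(k-l)$ is the Levi supported on the block $\{l+1,\dots,k\}$ of indices on which $\la$ vanishes, so $\Phi^+(\mf l)=\{\delta_i-\delta_j\mid l+1\le i<j\le k\}$, and hence $\Phi^+\setminus\Phi^+(\mf l)$ consists of the $\delta_i-\delta_j$ with $i<j$ and $\{i,j\}\not\subseteq\{l+1,\dots,k\}$. On the other hand, from the normal form $\la_1>\cdots>\la_l>\la_{l+1}=\cdots=\la_k=0>\la_{k+1}>\cdots>\la_n$ one has, for $i<j$, that $\la_i>\la_j$ unless $i$ and $j$ both lie in $\{l+1,\dots,k\}$. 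Thus $\{(i,j)\mid i<j,\ \la_i>\la_j\}$ is exactly the set of $(i,j)$ with $\delta_i-\delta_j\in\Phi^+\setminus\Phi^+(\mf l)$, and the stabilizer $\mf S_\la$ of $\la$ in $\mf S_n$ equals the parabolic subgroup $\mf S_{\{l+1,\dots,k\}}\cong\mf S_{k-l}$, so the coset spaces $\mf S_n/\mf S_{k-l}$ and $\mf S_n/\mf S_\la$ coincide. Assembling these identifications shows that the $w$-summand in \eqref{Euler:P1} is precisely $w\left(x_1^{\la_1}\cdots x_n^{\la_n}\prod_{i<j,\ \la_i>\la_j}\frac{x_i+x_j}{x_i-x_j}\right)$, which is the summand of \eqref{def:schurP}, whence the claim.

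I do not anticipate a genuine obstacle: the substantive work is already contained in \eqref{Euler:P1} (and in the Peterson--Sergeev and Brundan identification being recalled here). The only point requiring care is the combinatorial bookkeeping---namely that the positive roots removed in passing to $\Phi^+(\mf l)$ are exactly the $\delta_i-\delta_j$ with $\la_i=\la_j$, and that the symmetry group $\mf S_{k-l}$ coincides with the stabilizer $\mf S_\la$---both of which are transparent from the explicit normal form of a dominant integer weight.
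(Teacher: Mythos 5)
Your proof is correct and follows exactly the route the paper takes: the paper simply states that Proposition~\ref{prop:aux2} follows ``by comparing \eqref{Euler:P1} with \eqref{def:schurP}'' under the identification $x_i=e^{\delta_i}$, and your proposal supplies the bookkeeping details (the factor-by-factor match of $\frac{e^{\alpha/2}+e^{-\alpha/2}}{e^{\alpha/2}-e^{-\alpha/2}}$ with $\frac{x_i+x_j}{x_i-x_j}$, and the identifications $\Phi^+\setminus\Phi^+(\mf l)\leftrightarrow\{(i,j)\mid i<j,\ \la_i>\la_j\}$ and $\mf S_{k-l}=\mf S_\la$) that the paper leaves implicit.
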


Recall the Weyl denominator identity $\sum_{w\in\mf S_n}(-1)^{\ell(w)}e^{w(\rho)}=\prod_{\alpha\in\Phi^+}\left(e^{\alpha/2}-e^{-\alpha/2}\right)$, which in turn implies that
\begin{align*}
\sum_{w\in\mf S_n}(-1)^{\ell(w)}e^{2w(\rho)}=\prod_{\alpha\in\Phi^+}\left(e^{\alpha}-e^{-\alpha}\right).
\end{align*}
Hence we get by the Weyl character formula the well-known identity:
\begin{align}\label{ch:lrho}
{\rm ch}L^0\left(\rho\right)=\frac{\sum_{w\in\mf S_n}(-1)^{\ell(w)}e^{w(2\rho)}}{\prod_{\alpha\in\Phi^+}e^{\alpha/2}-e^{-\alpha/2}}= \prod_{\alpha\in\Phi^+}\left(e^{\alpha/2}+e^{-\alpha/2}\right).
\end{align}

Now let $\la\in\La^+_\Z$ be such that all the $\la_i$s in $\la\in\La^+_\Z$ are distinct. In this case we have $\mf S_\la=\{e\}$ and furthermore $\la-\rho$ is a generalized partition and hence we have
\begin{align}\label{P:distinct}
\begin{split}
P_\la=&\prod_{i<j}(x_i+x_j)\sum_{w\in\mf S_n}w\left(x_1^{\la_1}x_2^{\la_2}\cdots x_n^{\la_n}\prod_{i<j}\frac{1}{x_i-x_j}\right)\\
= &s_{\rho+\sum_{i=1}^n\frac{n-1}{2}\ep_i}\cdot s_{\la-\rho-\sum_{i=1}^n\frac{n-1}{2}\ep_i}\\
= & {\rm ch}L^0(\rho)\cdot{\rm ch}L^0(\la-\rho) = {\rm ch}\left[L^0(\rho)\otimes L^0(\la-\rho)\right],
\end{split}
\end{align}
where in the second equality above we have made use of \eqref{ch:lrho}.

Since the Schur Laurent polynomials form a basis for the space of symmetric Laurent polynomials, for any $\la\in\La^+_\Z$, we can write
\begin{align}\label{P:in:s}
P_\la=\sum_{\mu}g_{\mu\la}s_\mu,\quad g_{\mu\la}\in\Q,
\end{align}
where the sum is over all generalized partitions $\mu$ of length $n$.
When $\la$ is a strict partition, $P_\la$ is a symmetric polynomial, and in this case in the sum on the right hand side of \eqref{P:in:s} the non-zero summands are necessarily all Schur polynomials. Furthermore, in this case it is known that $g_{\mu\la}\in\Z_+$ \cite[Theorem 9.3(b)]{St}. Indeed, it follows from \cite{Br2} that in general the coefficients $g_{\mu\la}$ in \eqref{P:in:s} lie in $\Z_+$ as well. We shall see this in Remark \ref{rem:coeffs:P} below.

\subsection{Brundan's irreducible character formula}\label{sec:Br:main}

Let $\la=\sum_{i=1}^n\la_i\ep_i\in\La^+_\Z$. Let $p$ be the maximal integer such that there exist $1\le i_1<i_2<\cdots i_p<j_p<\cdots j_1\le n$ with $\la_{i_s}+\la_{j_s}=0$, for all $1\le s\le p$. Let $\texttt{I}_0:=\{|\la_i|,1\le i\le n\}$. For each $1\le s\le p$ we define the set $\texttt{I}_s$ and the integer $k_s$ inductively as follows: If $\la_{i_s}>0$, then let $k_s$ be the smallest positive integer such that $k_s>\la_{i_s}$ and $k_s\not\in \texttt{I}_{s-1}$. Define $\texttt{I}_s:=\texttt{I}_{s-1}\cup\{k_s\}$. If $\la_{i_s}=0$, let $k_s$ and $k'_s$ be the two smallest positive integers such that $k_s,k'_s>\la_{i_s}$ and $k_s,k'_s\not\in\texttt{I}_{s-1}$. In the case when $n-\ell(\la)$ is even, we let $k_s<k'_s$, while in the case $n-\ell(\la)$ is odd, we let $k'_s<k_s$. Define $\texttt{I}_s:=\texttt{I}_{s-1}\cup\{k_s,k'_s\}$.

Let $\theta=(\theta_1,\ldots,\theta_p)\in\Z_2^p$. Define $\texttt{R}_\theta(\la)$ to be the unique $\mf S_n$-conjugate in $\La^+_\Z$ of
\begin{align*}
\la+\sum_{s=1}^p\theta_s k_s\left(\ep_{i_s}-\ep_{j_s}\right)\in\La_\Z.
\end{align*}

The following is the Main Theorem in \cite{Br2}.

\begin{thm}(Brundan)\label{thm:Brundan}
Let $\mu\in\La^+_\Z$. Then in $K(\mc O)$ we have
\begin{align*}
[E(\mu)]=\sum_{\la}d_{\mu\la} [L(\la)],
\end{align*}
where the summation is over $\la\in\La^+_\Z$ such that there exists a $\theta_\la\in\Z_2^p$ with $\texttt{R}_{\theta_\la}(\la)=\mu$ and $d_{\mu\la}=2^{\frac{\ell(\mu)-\ell(\la)}{2}}$.
\end{thm}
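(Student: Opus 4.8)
The plan is to realize the integer-weight part of the Grothendieck group $K(\mc O)$ as the specialization at $q=1$ of a \emph{Fock space} $\mc F$ — a $\Q(q)$-module over the quantized enveloping algebra of a suitable infinite-rank Kac--Moody (super)algebra — carrying a \emph{monomial basis} $\{M_\mu\}_{\mu\in\La^+_\Z}$ whose image at $q=1$ matches the Euler characteristics $[E(\mu)]$, together with Lusztig's \emph{canonical basis} $\{U_\la\}_{\la\in\La^+_\Z}$ whose image at $q=1$ matches the irreducibles $[L(\la)]$. Under this dictionary the assertion of the theorem becomes a purely combinatorial statement about the transition matrix between $\{U_\la\}$ and $\{M_\mu\}$.

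First I would construct $\mc F$ and pin down the monomial basis. The natural model is the $\Z[q,q^{-1}]$-lattice with basis indexed by $\La^+_\Z$ on which the quantum group acts through box-adding and box-removing Chevalley operators, normalized so that — by Proposition~\ref{prop:aux2} — $M_\mu$ specializes at $q=1$ to ${\rm ch}\,E(\mu)=2^{\lceil\ell(\mu)/2\rceil}P_\mu$; thus the $\C$-span of the $M_\mu$ at $q=1$ is the span of the Schur $P$-Laurent polynomials. Next I would equip $\mc F$ with a bar involution — a $\Q$-linear map, semilinear for $q\mapsto q^{-1}$, fixing a highest-weight vector — and invoke the Kazhdan--Lusztig/Lusztig uniqueness lemma to produce the canonical basis $\{U_\la\}$, characterized by bar-invariance together with the unitriangularity $U_\la=M_\la+\sum_\mu c_{\mu\la}(q)\,M_\mu$, the sum over $\mu$ strictly below $\la$ and $c_{\mu\la}(q)\in q\Z[q]$.

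The representation-theoretic identification is the heart of the argument. On one side, the character formula of Proposition~\ref{prop:aux2} — namely ${\rm ch}\,E(\mu)=2^{\lceil\ell(\mu)/2\rceil}P_\mu$ with $P_\mu=e^\mu+(\text{strictly lower terms in the dominance order})$ — combined with the fact that $L(\mu)$ is the only irreducible in $\mc O$ with highest weight $\mu$ and that $\dim L(\mu)_\mu=\dim I_\mu=2^{\lceil\ell(\mu)/2\rceil}$, gives the unitriangularity $[E(\mu)]=[L(\mu)]+(\text{a }\Z\text{-combination of lower }[L(\nu)])$. On the other side, one must show that $[L(\la)]$, expanded in the basis $\{[E(\mu)]\}$, is bar-invariant; this requires a contravariant duality $M\mapsto M^\vee$ on the integer-weight part of $\mc O$ with $L(\la)^\vee\cong L(\la)$ up to a parity shift — and it is exactly here that the parity of $\ell(\la)$ enters, through the fact recalled above that $I_\la\cong\Pi I_\la$ if and only if $\ell(\la)$ is odd — together with the compatibility of this duality with the bar involution on $\mc F$. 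Granting both, the uniqueness lemma forces $[L(\la)]\leftrightarrow U_\la$; inverting the now-known unitriangular matrix produces the formula, and the explicit value $d_{\mu\la}=2^{(\ell(\mu)-\ell(\la))/2}$ together with the indexing of the sum by those $\la$ for which $\texttt{R}_{\theta_\la}(\la)=\mu$ fall out of an explicit computation of $\{U_\la\}$ inside $\mc F$: the canonical-basis moves that split a pair of zeros or raise a cancelling pair $\la_{i_s}+\la_{j_s}=0$ up to $\pm k_s$ realize the operators behind $\texttt{R}_\theta$, while the sets $\texttt{I}_s$ bookkeep the $k_s$ so that distinct moves produce distinct dominant weights.

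I expect the main obstacle to be the bar-invariance of $[L(\la)]$ — equivalently, the Kazhdan--Lusztig positivity of the coefficients $c_{\mu\la}(q)$ for $\mf q(n)$. There is no direct geometric input available here, so one must either reduce the canonical-basis computation in $\mc F$ to an already-understood type-$A$ Kazhdan--Lusztig problem, exploiting the precise identity of the quantum group acting on $\mc F$, or else take Stembridge's positivity for Schur $P$-functions in the strict-partition case (\cite{St}, cf.~\eqref{P:in:s}) as a base case and propagate it to all of $\La^+_\Z$ by translation and Jantzen-type functors. Once positivity is secured, the remaining ingredients — the construction of $\mc F$, the contravariant duality, and the combinatorial bookkeeping with $\texttt{I}_s$ and $\texttt{R}_\theta$ — are lengthy but essentially formal.
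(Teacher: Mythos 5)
The paper does not prove this theorem. It is quoted verbatim as Brundan's Main Theorem and cited directly from \cite{Br2} (the sentence immediately preceding the statement reads ``The following is the Main Theorem in \cite{Br2}''), so there is no in-paper argument against which to compare your proposal; one can only compare it against Brundan's actual proof. Your sketch does capture the broad architecture of that proof: a $q$-deformed Fock space (for a quantum group of type $b_\infty$, not an unspecified Kac--Moody superalgebra) carrying monomial and canonical bases, a dictionary sending $M_\mu\mapsto[E(\mu)]$ and $U_\la\mapsto[L(\la)]$ at $q=1$, and Lusztig's uniqueness lemma driving the transition-matrix computation.

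There are, however, genuine gaps in the way you fill in the middle. First, bar-invariance of $[L(\la)]$ in the Grothendieck group and Kazhdan--Lusztig positivity of the coefficients $c_{\mu\la}(q)$ are \emph{not} equivalent, as you assert; bar-invariance would follow from a suitable duality functor fixing irreducibles up to parity shift, whereas positivity is a separate, strictly stronger statement that is neither used nor established in Brundan's proof. Second, and more seriously, the identification $[L(\la)]\leftrightarrow U_\la$ does not come for free from the uniqueness lemma ``once positivity is secured'': one must also check that the partial order, the degree normalization, and the leading-term coefficients agree between the abstract Fock-space setting and the module-theoretic one. Brundan does this by an inductive matching of translation (special projective) functors on the category-$\mathcal{O}$ side with the Chevalley operators on the Fock space, an argument you do not sketch at all; your proposed fallback of propagating Stembridge's positivity from strict partitions to all of $\La^+_\Z$ via translation or Jantzen-type functors is speculative and has no clear mechanism. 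Finally, the explicit computation of the canonical basis that produces the $\texttt{R}_\theta$ combinatorics, the bookkeeping sets $\texttt{I}_s$, and the coefficients $2^{(\ell(\mu)-\ell(\la))/2}$ is itself a substantial piece of work, not ``essentially formal.'' In short, the scaffolding is right, but the load-bearing step — how one actually proves $[L(\la)]$ corresponds to the canonical basis vector — is precisely where your proposal is thinnest.
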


\begin{rem}\label{rem:coeffs:P} According to Theorem \ref{thm:Brundan} the $d_{\mu\la}$s are always non-negative. Now restricting the $\G$-action on $L(\la)$ to $\G_\even$, we can express the character of $L(\la)$ as a non-negative integral linear combination of Schur Laurent polynomials. Finally, using Proposition \ref{prop:aux2} we see that the coefficients $g_{\mu\la}$ in \eqref{P:in:s} have representation-theoretical interpretation and hence they are always non-negative integers.
\end{rem}

\section{Proofs of Theorems \ref{conj:GGM:finite} and \ref{thm:main}}\label{sec:proofs}
The goal of this section is to establish the Gorelik-Grantcharov-Mazorchuk conjecture in the case when the module is finite dimensional. We start by studying the supercharacters of Verma modules.

\begin{lem}\label{lem:sch:verma}
Let $\la\in\h^*_{\bar 0}$ and $\Delta(\la)$ be the Verma module of highest weight $\la$. Then, we have
\begin{align*}
{\rm sch}\Delta(\la)=\begin{cases}
 0,\text{ if }\la\not=0,\\
 1, \text{ if }\la=0.
\end{cases}
\end{align*}
\end{lem}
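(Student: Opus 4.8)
The plan is to compute the supercharacter of the Verma module $\Delta(\la) = \mathrm{Ind}_{\mf b}^{\G} I_\la$ by combining the PBW basis decomposition over the nilradical with the character of the $\h$-module $I_\la$. First I would recall that as a vector space $\Delta(\la) \cong U(\n^-) \otimes I_\la$, where $\n^-$ is the negative nilradical of $\mf b$, which has super-dimension $(|{\Phi^+}|\,|\,|{\Phi^+}|)$ since each root space $\G_\alpha$ has super-dimension $(1|1)$. Hence as an $\h_{\bar 0}$-module (tracking parity), the character of $U(\n^-)$ as a formal sum is $\prod_{\alpha \in \Phi^+}$ of the contribution of the symmetric algebra on a $(1|1)$-dimensional space at weight $-\alpha$. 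The key parity bookkeeping is that the even generator of $\G_{-\alpha}$ contributes $\sum_{k\ge 0} e^{-k\alpha}$ with all terms even, while the odd generator contributes $(1 + e^{-\alpha})$ with the $e^{-\alpha}$ term odd; so in the supercharacter ring each positive root contributes a factor $\tfrac{1}{1-e^{-\alpha}} \cdot (1 - e^{-\alpha}) = 1$. Thus $\mathrm{sch}\, U(\n^-) = 1$.

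Next I would compute $\mathrm{sch}\, I_\la$. Recall $I_\la = \mathrm{Ind}_{\h'}^{\h}\C v_\la$ has dimension $2^{\lceil \ell(\la)/2\rceil}$, concentrated in the single $\h_{\bar 0}$-weight $\la$, built from a Clifford-algebra-type module on $\h_{\bar 1}/\h'_{\bar 1}$. The crucial fact, already flagged in the excerpt, is that $I_\la \cong \Pi I_\la$ as $\h$-modules (equivalently in $\mc O^{\even}$) precisely when $\ell(\la)$ is odd. When $\ell(\la)$ is odd, the isomorphism $I_\la \cong \Pi I_\la$ forces $\dim (I_\la)_{\bar 0} = \dim (I_\la)_{\bar 1}$, hence $\mathrm{sch}\, I_\la = 0$. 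When $\ell(\la)$ is even and nonzero, $\dim I_\la = 2^{\ell(\la)/2}$ with $\ell(\la)/2 \ge 1$; here I would argue directly from the Clifford module structure that again the even and odd subspaces have equal dimension (the Clifford algebra on an even-dimensional space with the grading coming from a chosen polarization has balanced graded pieces), so $\mathrm{sch}\, I_\la = 0$. Only when $\ell(\la) = 0$, i.e. $\la = 0$, do we have $I_\la = \C v_0$ one-dimensional and purely even, giving $\mathrm{sch}\, I_0 = 1$.

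Finally I would assemble: since $\mathrm{sch}$ is multiplicative on tensor products of $\h_{\bar 0}$-graded superspaces, $\mathrm{sch}\,\Delta(\la) = \mathrm{sch}\, U(\n^-) \cdot e^{\la} \cdot \mathrm{sch}\, I_\la \cdot (\text{the shift built into how }I_\la\text{ sits at weight }\la)$, which collapses to $\mathrm{sch}\, I_\la \cdot e^{\la}$ after the $U(\n^-)$ factor trivializes; this is $0$ for $\la \ne 0$ and $e^0 = 1$ for $\la = 0$. I expect the main obstacle to be the $\ell(\la)$ even (but nonzero) case of $\mathrm{sch}\, I_\la = 0$: unlike the odd case, there is no parity-swapping isomorphism to invoke for free, so one must pin down the $\Z_2$-grading on the Clifford/induced module $\mathrm{Ind}_{\h'}^{\h}\C v_\la$ and verify the graded dimensions agree — a short but genuine computation with the isotropic-subspace construction rather than a formal consequence. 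An alternative route that sidesteps this, which I would keep in reserve, is to note that for $\la \ne 0$ some $\bar e_{ii}$ with $\la_i \ne 0$ acts on $I_\la$ with $\bar e_{ii}^2 = e_{ii}$ acting as the nonzero scalar $\la_i$, so $\bar e_{ii}$ is an invertible odd operator on $I_\la$, immediately yielding $\dim(I_\la)_{\bar 0} = \dim(I_\la)_{\bar 1}$ in one stroke for all $\la \ne 0$.
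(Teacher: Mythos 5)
Your argument is essentially the same as the paper's: both reduce $\mathrm{sch}\,\Delta(\la)$ to $\mathrm{sch}\,I_\la$ by observing that the $U(\mathfrak n^-)$ contribution to the supercharacter is $\prod_{\alpha\in\Phi^+}(1-e^{-\alpha})/\prod_{\alpha\in\Phi^+}(1-e^{-\alpha})=1$, and then invoke $\dim(I_\la)_{\bar 0}=\dim(I_\la)_{\bar 1}$ for $\la\ne 0$. The paper simply asserts that last equality, whereas you worry about proving it; your closing observation that some $\bar e_{ii}$ with $\la_i\ne 0$ squares to the nonzero scalar $\la_i$ and is therefore an invertible odd operator on $I_\la$ is a clean uniform justification that makes the fretting about the $\ell(\la)$ even case unnecessary.
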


\begin{proof}
We have $\text{ch}\Delta(\la)=\frac{\prod_{\alpha\in\Phi^+}(1+e^{-\alpha})}{\prod_{\alpha\in\Phi^+}(1-e^{-\alpha})}\dim I_\la$, and thus
\begin{align*}
{\rm sch}\Delta(\la) = \frac{\prod_{\alpha\in\Phi^+}(1-e^{-\alpha})}{\prod_{\alpha\in\Phi^+}(1-e^{-\alpha})}{\rm sch} I_\la = {\rm sch} I_\la.
\end{align*}
Now, the lemma follows, because ${\rm dim}\left(I_\la\right)_{\bar 0}={\rm dim}\left(I_\la\right)_{\bar 1}$, for $\la\not=0$.
\end{proof}

\begin{prop}\label{prop:aux1}
For any $\la\in\h^*_{\bar 0}$ we have ${\rm sch}L(\la)\in\Z$. Furthermore, if $\la\not\succeq 0$, then ${\rm sch}L(\la)=0$.
\end{prop}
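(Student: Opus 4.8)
The plan is to reduce everything to the behaviour of supercharacters along the decomposition of a Verma module into irreducibles, using central-character considerations to control which $L(\mu)$ can occur. First I would recall that the supercharacter is additive on short exact sequences, so it descends to a homomorphism $\mathrm{sch}\colon K(\mc O)\to\Z[[e^{\delta_i}]]$ (or rather to the appropriate completion), and in particular $\mathrm{sch}\,\Delta(\la)=\sum_{\mu}[\Delta(\la):L(\mu)]\,\mathrm{sch}\,L(\mu)$. A Verma module $\Delta(\la)$ has finite length and every composition factor $L(\mu)$ satisfies $\mu\preceq\la$, i.e.\ $\chi_\mu=\chi_\la$ and $\la-\mu\in\sum_{\alpha\in\Phi^+}\Z_+\alpha$; moreover $[\Delta(\la):L(\la)]=1$. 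I would run an induction on $\la$ with respect to the partial order $\succeq$ (which is well-founded on the relevant linkage class, since $\la-\mu$ is a fixed-size non-negative integral combination of positive roots and there are only finitely many such $\mu$ below $\la$ with $\chi_\mu=\chi_\la$).

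For the integrality statement: by \lemref{lem:sch:verma}, $\mathrm{sch}\,\Delta(\la)\in\{0,1\}\subseteq\Z$. Rearranging the composition-series identity gives
\begin{align*}
\mathrm{sch}\,L(\la)=\mathrm{sch}\,\Delta(\la)-\sum_{\mu\prec\la}[\Delta(\la):L(\mu)]\,\mathrm{sch}\,L(\mu),
\end{align*}
and by the inductive hypothesis each $\mathrm{sch}\,L(\mu)$ on the right is an integer, while the multiplicities $[\Delta(\la):L(\mu)]$ are non-negative integers; hence $\mathrm{sch}\,L(\la)\in\Z$. The base case is a $\la$ that is minimal in its linkage class, where $\Delta(\la)=L(\la)$ and $\mathrm{sch}\,L(\la)=\mathrm{sch}\,\Delta(\la)\in\Z$.

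For the vanishing statement, suppose $\la\not\succeq 0$, equivalently either $\chi_\la\neq\chi_0$ or $\la\notin\sum_{\alpha\in\Phi^+}\Z_+\alpha$. Again induct on $\la$ in the order $\succeq$. If $\chi_\la\neq\chi_0$, then $\la\neq 0$, so $\mathrm{sch}\,\Delta(\la)=0$ by \lemref{lem:sch:verma}; and every $\mu\prec\la$ has $\chi_\mu=\chi_\la\neq\chi_0$, so in particular $\mu\neq 0$ and (since $\mu$ still lies outside the $\chi_0$-linkage class) $\mu\not\succeq 0$, so $\mathrm{sch}\,L(\mu)=0$ by induction; the displayed identity then forces $\mathrm{sch}\,L(\la)=0$. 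If instead $\chi_\la=\chi_0$ but $\la$ is not a non-negative integral combination of positive roots, the same argument works: $\la\neq0$ so $\mathrm{sch}\,\Delta(\la)=0$, and each $\mu\prec\la$ satisfies $0-\mu=(\la-\mu)-\la$, so $\mu\succeq 0$ would give $\la=\mu+(\la-\mu)\in\sum\Z_+\alpha$, a contradiction; hence $\mu\not\succeq0$ and $\mathrm{sch}\,L(\mu)=0$ by induction, and again $\mathrm{sch}\,L(\la)=0$.

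The main thing to be careful about is the well-foundedness of the induction and the bookkeeping of the linkage class: one must check that the set $\{\mu : \mu\preceq\la\}$ is finite (so that the sum over composition factors is finite and the descent terminates) and that it is closed downward in the sense needed — both of which follow from the description of $\chi_\la=\chi_\mu$ recalled in \secref{sec:queer} together with the fact that $\la-\mu$ ranges over a finite set of elements of $\sum_{\alpha\in\Phi^+}\Z_+\alpha$ of bounded height. No deep input beyond \lemref{lem:sch:verma} and the standard structure of category $\mc O$ is needed here; the real work of the paper — forcing $\mathrm{sch}\,L(\la)=0$ when $\la\succeq 0$ but $\la\neq0$ — is taken up separately via \thmref{thm:main}.
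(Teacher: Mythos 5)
Your proposal identifies the right input (Lemma~\ref{lem:sch:verma} plus additivity of $\mathrm{sch}$ on short exact sequences) and the right idea (pass between the Verma and simple classes in the Grothendieck group), but the inductive machinery you set up does not actually run. The sticking point is your claim that the set $\{\mu : \mu\preceq\la\}$ is finite, supported by the assertion that $\la-\mu$ ``ranges over a finite set of elements of $\sum_{\alpha\in\Phi^+}\Z_+\alpha$ of bounded height.'' For $\qq(n)$ this is false: the linkage condition recalled in Section~\ref{sec:queer} allows $\mu = w\bigl(\la - \sum_i k_i\alpha_i\bigr)$ with \emph{arbitrary complex} $k_i$, subject only to $(\la,\ov{\alpha}_i)=0$ and orthogonality of the $\alpha_i$. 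In particular, taking any $\la$ with some $(\la,\ov{\alpha})=0$ (e.g.\ $\la=0$ and any $\alpha\in\Phi^+$), the weights $\la-k\alpha$ for $k\in\Z_+$ all satisfy $\chi_{\la-k\alpha}=\chi_\la$ and $\la-k\alpha\preceq\la$, so the linkage ideal below $\la$ is infinite and the order $\succ$ has infinite descending chains. The induction you propose is not well-founded, and this is precisely the phenomenon the paper flags with ``The sum above is an infinite sum in general.''

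The paper's own proof avoids this by writing $[L(\la)]$ in the (formally infinite) Verma basis of $K(\mc O^\even)$ and observing that for any fixed weight $\nu$ and parity $\epsilon$ only finitely many terms contribute to $(L(\la)_\epsilon)_\nu$ --- a weight-by-weight local-finiteness argument rather than an induction on $\succeq$ --- and then applying $\mathrm{sch}$ so that by Lemma~\ref{lem:sch:verma} only the coefficients of $[\Delta(0)]$ and $[\Pi\Delta(0)]$ survive. You could salvage your route by replacing the ill-founded induction with the same kind of local-finiteness argument (fix a weight $\nu$, note the identity is a finite identity in degree $\nu$, and solve triangularly degree by degree). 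One further point you gloss over: since $\Pi L(\mu)\not\cong L(\mu)$ in $\mc O^\even$ when $\ell(\mu)$ is even, the supercharacter version of the composition-series identity must keep track of whether a composition factor appears with even or odd parity shift (they contribute with opposite signs to $\mathrm{sch}$); the $a_{\mu\la}$/$b_{\mu\la}$ bookkeeping in the paper is there exactly to handle this, and a symbol like $[\Delta(\la):L(\mu)]$ alone hides it.
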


\begin{proof}
The sets $\{[\Delta(\la)]\vert \la\in\h^*_{\bar 0}\}$ and $\{[L(\la)]\vert \la\in\h^*_{\bar 0}\}$ are two sets of basis for $K(\mc O)$. Now, if $\ell(\la)$ is odd, then $\Pi(L(\la))\cong L(\la)$ and also $\Pi(\Delta(\la))\cong\Delta(\la)$ in $\mc O^\even$. On the other hand, if $\ell(\la)$ is even then $\Pi(L(\la))\not\cong L(\la)$ and also $\Pi(\Delta(\la))\not\cong\Delta(\la)$ in $\mc O^\even$. Thus, we have the following identity in $K(\mc O^\even)$:
\begin{align*}
[L(\la)]=\sum_{\mu\preceq\la}a_{\mu\la}[\Delta(\mu)]+\sum_{\mu\preceq\la;\ell(\mu)\text{ even}}b_{\mu\la}[\Pi\left(\Delta(\mu)\right)],
\end{align*}
where $a_{\mu\la},b_{\mu\la}\in\Z$. The sum above is an infinite sum in general; however, for fixed $\nu\in\h^*_{\bar 0}$ and fixed $\epsilon\in\Z_2$ such that $(L(\la)_{\epsilon})_\nu\not=0$ there are only finitely many $\mu$s such that $(\Delta(\mu)_{\epsilon})_\nu\not=0$ with $a_{\mu\la}\not=0$ and only finitely many $\mu$s such that $(\Pi \Delta(\mu)_{\epsilon})_\nu\not=0$ with $b_{\mu\la}\not=0$ and $\ell(\mu)$ even.
By Lemma \ref{lem:sch:verma}, we have
\begin{align*}
{\rm sch}L(\la)&=\sum_{0\not=\mu\preceq\la}a_{\mu\la}{\rm sch}\Delta(\mu)+\sum_{0\not=\mu\preceq\la;\ell(\la)\text{ even}}b_{\mu\la}{\rm sch}\Pi\Delta(\mu)+a_{0\la}{\rm sch}\Delta(0)+b_{0\la}{\rm sch}\Pi\Delta(0)\\
=&a_{0\la}{\rm sch}\Delta(0)+b_{0\la}{\rm sch}\Pi\Delta(0) = a_{0\la}-b_{0\la}\in\Z.
\end{align*}
Now if $\la\not\succeq 0$, then $a_{0\la}=b_{0\la}=0$, and hence ${\rm sch}L(\la)=0$.
\end{proof}

We have the following immediate corollary.

\begin{cor}
Let $\la\in\h^*_\even\setminus\La_\Z$. Then ${\rm sch}L(\la)=0$.
\end{cor}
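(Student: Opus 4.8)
The plan is to deduce this immediately from Proposition \ref{prop:aux1}, whose second assertion says that ${\rm sch}L(\la)=0$ whenever $\la\not\succeq 0$. So the only thing to verify is that $\la\in\h^*_\even\setminus\La_\Z$ forces $\la\not\succeq 0$. Recall that by definition $\la\succeq 0$ requires two conditions: $\chi_\la=\chi_0$ and $\la-0=\la\in\sum_{\alpha\in\Phi^+}\Z_+\alpha$. I would focus on the second condition.

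First I would recall that $\Phi^+=\{\delta_i-\delta_j\mid 1\le i<j\le n\}$, so every positive root has integer coefficients when expanded in the basis $\{\delta_1,\ldots,\delta_n\}$ of $\h^*_\even$; consequently $\sum_{\alpha\in\Phi^+}\Z_+\alpha\subseteq\La_\Z$. Hence if $\la\notin\La_\Z$, then in particular $\la\notin\sum_{\alpha\in\Phi^+}\Z_+\alpha$, and therefore $\la\not\succeq 0$ (regardless of whether the central character condition $\chi_\la=\chi_0$ happens to hold). Applying Proposition \ref{prop:aux1} then yields ${\rm sch}L(\la)=0$.

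There is essentially no obstacle here: this is a one-line consequence of Proposition \ref{prop:aux1} together with the elementary observation that the root lattice is contained in $\La_\Z$. The only point requiring a moment's care is to note that $\la\not\succeq 0$ is guaranteed by the failure of the lattice condition alone, so one never needs to analyze the central character $\chi_\la$ or the Serganova-type linkage description recalled in Section \ref{sec:queer}.
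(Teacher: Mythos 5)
Your argument is correct and is exactly the intended one: the paper labels this an immediate corollary of Proposition \ref{prop:aux1}, and you have simply spelled out the (one-line) verification that $\la\notin\La_\Z$ implies $\la\notin\sum_{\alpha\in\Phi^+}\Z_+\alpha\subseteq\La_\Z$, hence $\la\not\succeq 0$. Nothing further is needed.
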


Thus, Conjecture \ref{conj:GGM} is reduced to the case of $\la\in\La_\Z$.

\begin{prop}\label{prop:aux4} Let $\{L(\la):L^0(\mu)\}$ and $\{L(\la):\Pi L^0(\mu)\}$ denote
the number of composition factors in $L(\la)$ isomorphic to the irreducible $\gl(n)$-module $L^0(\mu)$ of even and odd parity, respectively. Suppose that $\la\not=0$. Then $\{L(\la):L^0(\mu)\}=\{L(\la):\Pi L^0(\mu)\}$, for $\mu\not=0$. Furthermore,
\begin{align*}
{\rm sch}L(\la)=\{L(\la):L^0(0)\}-\{L(\la):\Pi L^0(0)\}.
\end{align*}
In particular, if $[L(\la):L^0(0)]=0$ in the category of $\G_\even$-modules, then ${\rm sch}L(\la)=0$.
\end{prop}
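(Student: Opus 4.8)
The plan is to exploit the parity-involution $\Pi$ together with the fact that restricting a $\G$-module to $\G_\even$ is compatible with $\Pi$, and that the composition factors of $L(\la)$ as a $\G_\even$-module come in $\Pi$-related pairs except possibly for the self-dual ones. First I would record the formula
\begin{align*}
{\rm sch}L(\la)=\sum_{\mu}\Bigl(\{L(\la):L^0(\mu)\}-\{L(\la):\Pi L^0(\mu)\}\Bigr){\rm ch}L^0(\mu),
\end{align*}
which follows simply because, upon restriction to $\G_\even$, an even copy of $L^0(\mu)$ contributes $+{\rm ch}L^0(\mu)$ to ${\rm sch}$ while an odd copy contributes $-{\rm ch}L^0(\mu)$, and the supercharacter of $L(\la)$ as a $\G$-module coincides with its supercharacter as a $\G_\even$-module (the weight spaces and their parities are the same). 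Since the ${\rm ch}L^0(\mu)=s_\mu$ are linearly independent, the entire statement reduces to showing that $\{L(\la):L^0(\mu)\}=\{L(\la):\Pi L^0(\mu)\}$ for every $\mu\not=0$.

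Next I would pin down when $L(\la)$ is $\Pi$-invariant. From the discussion in Section~\ref{sec:queer}, $I_\la\cong\Pi I_\la$ in $\mc O^\even$ precisely when $\ell(\la)$ is odd, and the same holds for $\Delta(\la)$ and hence for its irreducible quotient $L(\la)$. Consider first the case $\ell(\la)$ odd. Then $L(\la)\cong\Pi L(\la)$ as $\G$-modules, so applying $\Pi$ to the $\G_\even$-composition series of $L(\la)$ gives back a composition series of $L(\la)$; comparing multiplicities, $\{L(\la):L^0(\mu)\}=\{L(\la):\Pi L^0(\mu)\}$ for \emph{all} $\mu$, including $\mu=0$, so in fact ${\rm sch}L(\la)=0$ outright in this subcase and there is nothing more to prove.

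The remaining, genuine case is $\ell(\la)$ even, where $L(\la)\not\cong\Pi L(\la)$ and the above symmetry argument fails; this is where the real work lies. Here I would invoke the structure of $\G_\odd$ as a $\G_\even$-module: the odd root vectors act, and in particular multiplication by the central odd element of $\h_\odd$ (or more robustly, the $\G_\odd$-action) intertwines $L(\la)_\even$ and $L(\la)_\odd$ in a way that, for nonzero weights, exchanges even and odd copies of a given $L^0(\mu)$. Concretely, for $\mu\not=0$ the weight $\mu$ is nonzero, so the operator given by the action of a suitable odd element does not vanish on the $\mu$-isotypic $\G_\even$-subquotient, yielding a parity-reversing isomorphism between the $L^0(\mu)$-isotypic and $\Pi L^0(\mu)$-isotypic pieces; this forces $\{L(\la):L^0(\mu)\}=\{L(\la):\Pi L^0(\mu)\}$. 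For $\mu=0$ such an operator can (and does) vanish, which is exactly why the $\mu=0$ term survives and ${\rm sch}L(\la)=\{L(\la):L^0(0)\}-\{L(\la):\Pi L^0(0)\}$. The final assertion, that $[L(\la):L^0(0)]=0$ implies ${\rm sch}L(\la)=0$, is then immediate since $[L(\la):L^0(0)]=\{L(\la):L^0(0)\}+\{L(\la):\Pi L^0(0)\}=0$ forces both summands to vanish. The main obstacle I anticipate is making the $\mu\not=0$ parity-exchange argument rigorous at the level of composition factors (rather than just on characters): one must argue that the odd action descends appropriately through the composition series and is an isomorphism on the relevant subquotients, which may require a filtration argument or an appeal to the complete reducibility of $L(\la)$ over $\G_\even$ for finite-dimensional modules (as noted before Corollary~\ref{cor:aux1}), so that ``composition factor'' can be replaced by ``direct summand'' and the odd operators can be analyzed isotypic-component by isotypic-component.
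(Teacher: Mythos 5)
Your overall framing is sound and the first half of your argument is essentially correct, but you take a genuinely different route from the paper and the core of your $\ell(\la)$ even case has a real gap.

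The paper's proof is much shorter and sidesteps the hard part entirely: it invokes Proposition~\ref{prop:aux1}, which already shows ${\rm sch}\,L(\la)\in\Z$ (a constant) for \emph{every} $\la$, via the expansion of $[L(\la)]$ into Verma classes and the fact that ${\rm sch}\,\Delta(\mu)=\delta_{\mu,0}$. Then, exactly as you set up, ${\rm sch}\,L(\la)=\sum_\mu(d_{\mu\la}-e_{\mu\la})\,{\rm ch}\,L^0(\mu)$, and since $\{{\rm ch}\,L^0(\mu)\}$ is linearly independent and the left side is a constant, all coefficients with $\mu\neq 0$ must vanish. That is the whole argument. Your plan, by contrast, tries to establish $d_{\mu\la}=e_{\mu\la}$ for $\mu\neq 0$ by exhibiting a parity-reversing intertwiner, which is a stronger and more structural claim, and this is exactly where things go wrong.

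The gap in your $\ell(\la)$ even case: you need an operator that commutes with $\G_\even$, is odd, and is an isomorphism on every $L^0(\mu)$-isotypic component with $\mu\neq 0$. Since $\G_\odd$ is the adjoint $\G_\even$-module, the space of $\G_\even$-invariant odd elements is one-dimensional, spanned by $z=\sum_i\bar e_{ii}$. But $z^2=\sum_i e_{ii}$ is the central element of $\gl(n)$, which acts on $L^0(\mu)$ by the scalar $|\mu|=\sum_i\mu_i$. Hence $z$ fails to be invertible on the $\mu$-isotypic piece precisely when $|\mu|=0$, and there are plenty of $\mu\neq 0$ with $|\mu|=0$ (e.g.\ $\mu=\delta_1-\delta_2$). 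Worse, the case you actually need to handle is $|\la|=0$ (if $|\la|\neq 0$ then $L^0(0)$ cannot occur at all, since every $\G_\even$-composition factor of $L(\la)$ has total weight $|\la|$), and when $|\la|=0$ \emph{every} composition factor $L^0(\mu)$ of $L(\la)$ has $|\mu|=0$, so the operator $z$ gives you nothing at all. Your statement that ``for $\mu\not=0$ the weight $\mu$ is nonzero, so the operator$\ldots$does not vanish'' conflates $\mu\neq 0$ with $|\mu|\neq 0$. To repair this you would need operators built from non-$\G_\even$-invariant odd elements, and then the argument becomes delicate, which is why the paper instead relies on the softer but decisive constraint ${\rm sch}\,L(\la)\in\Z$.

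One small positive: your observation that when $\ell(\la)$ is odd one has $L(\la)\cong\Pi L(\la)$, so that \emph{all} multiplicities pair up and ${\rm sch}\,L(\la)=0$ outright, is correct and gives a cleaner statement for half the weights than what the proposition asserts; the paper notes this $\Pi$-invariance inside the proof of Proposition~\ref{prop:aux1} but does not use it as a shortcut here.
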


\begin{proof} Let $\{L(\la):L^0(\mu)\}=d_{\mu\la}$ and $\{L(\la):\Pi(L^0(\mu))\}=e_{\mu\la}$. Then we have the following identity in the Grothendieck group of the category of $\G_\even$-modules:
\begin{align*}
[L(\la)]=\sum_{\mu}d_{\mu\la}[L^0(\mu)]+\sum_{\mu}e_{\mu\la}[\Pi L^0(\mu)].
\end{align*}
By Proposition \ref{prop:aux1} we have ${\rm sch}L(\la)=\sum_{\mu}(d_{\mu\la}-e_{\mu\la}){\rm ch}L^0(\mu)\in\Z$. Since $\{{\rm ch}L^0(\mu)\vert\mu\in\h^*_\even\}$ is linearly independent, it follows that $d_{\mu\la}=e_{\mu\la}$, for $\mu\not=0$.  Thus, ${\rm sch}L(\la)=d_{0\la}-e_{0\la}$.
\end{proof}

\begin{lem}\label{lem:to:finite}
Let $\la\in\h_\even^*$ such that $\la\not=\sum_{\alpha\in I}\alpha$, for any subset $I\subseteq\Phi^+$. Then in the category of $\G_\even$-modules we have
\begin{align*}
\left[L(\la):L^0(0)\right]=0.
\end{align*}
In particular, ${\rm sch}L(\la)=0$.
\end{lem}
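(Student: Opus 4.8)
The plan is to bound the multiplicity $[L(\la):L^0(0)]$ from above by the analogous multiplicity in the restriction $\Delta(\la)|_{\G_\even}$ of the Verma module, to compute that bound in terms of $\gl(n)$-Verma multiplicities, and to see that it already vanishes under the hypothesis on $\la$ via a linkage argument for $\gl(n)$. The final assertion ${\rm sch}L(\la)=0$ will then be immediate from \propref{prop:aux4}, which applies because the hypothesis excludes $\la=0$ (as $0=\sum_{\alpha\in\emptyset}\alpha$).

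First I would record the $\G_\even$-character of $\Delta(\la)$. By PBW, $U(\mf n^-)\cong S(\mf n^-_\even)\otimes\Lambda(\mf n^-_\odd)$ as $\h_\even$-weight modules, and every $\h_\even$-weight of $I_\la$ equals $\la$, so the computation in the proof of \lemref{lem:sch:verma} gives ${\rm ch}\Delta(\la)=\dim I_\la\cdot\frac{\prod_{\alpha\in\Phi^+}(1+e^{-\alpha})}{\prod_{\alpha\in\Phi^+}(1-e^{-\alpha})}e^\la$. Expanding $\prod_{\alpha\in\Phi^+}(1+e^{-\alpha})=\sum_{I\subseteq\Phi^+}e^{-\sum_{\alpha\in I}\alpha}$ and using ${\rm ch}\Delta^0(\nu)=e^\nu\big/\prod_{\alpha\in\Phi^+}(1-e^{-\alpha})$ turns this into ${\rm ch}\Delta(\la)=\dim I_\la\sum_{I\subseteq\Phi^+}{\rm ch}\,\Delta^0\big(\la-\sum_{\alpha\in I}\alpha\big)$. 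Restriction to $\G_\even$ does not change weight spaces, and $\Delta(\la)|_{\G_\even}$ is finitely generated (a finite-dimensional subspace already generates it over $U(\G_\even)$), so it lies in the BGG category of $\gl(n)$ and has finite length; hence in the corresponding Grothendieck group $[\Delta(\la)|_{\G_\even}]=\dim I_\la\sum_{I\subseteq\Phi^+}[\Delta^0(\la-\sum_{\alpha\in I}\alpha)]$. Since $L(\la)$ is a $\gl(n)$-quotient of $\Delta(\la)$, this yields $[L(\la):L^0(0)]\le\dim I_\la\sum_{I\subseteq\Phi^+}[\Delta^0(\la-\sum_{\alpha\in I}\alpha):L^0(0)]$.

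The heart of the argument is the claim that, for a $\gl(n)$-weight $\nu$, one has $[\Delta^0(\nu):L^0(0)]\neq 0$ only if $\nu=0$. Indeed, if $L^0(0)$ is a composition factor of $\Delta^0(\nu)$ then $0$ is a weight of $\Delta^0(\nu)$, so $\nu\in\sum_{\alpha\in\Phi^+}\Z_+\alpha$; moreover $0$ and $\nu$ lie in the same block, and since $\rho$ is regular this block is the orbit $\{w\cdot 0=w(\rho)-\rho\mid w\in\mf S_n\}$, each element of which satisfies $w(\rho)-\rho\in-\sum_{\alpha\in\Phi^+}\Z_+\alpha$. Thus $\nu$ and $-\nu$ both lie in $\sum_{\alpha\in\Phi^+}\Z_+\alpha$, which forces $\nu=0$. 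Feeding this back, $[\Delta(\la)|_{\G_\even}:L^0(0)]=\dim I_\la\cdot\#\{I\subseteq\Phi^+\mid \la=\sum_{\alpha\in I}\alpha\}$, and this number is $0$ precisely because of the hypothesis that $\la\neq\sum_{\alpha\in I}\alpha$ for every subset $I\subseteq\Phi^+$. Hence $[L(\la):L^0(0)]=0$, and \propref{prop:aux4} gives ${\rm sch}L(\la)=0$.

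I expect the one genuinely delicate point to be the rigidity in the third step: the naive necessary condition ``$0$ is a weight of $\Delta(\la)$'', i.e.\ $\la\in\sum_{\alpha\in\Phi^+}\Z_+\alpha$, is far too weak, and it is exactly the passage from ``weight'' to ``composition factor'' for $\gl(n)$ — together with the fact that $\Delta^0(0)$ is the only $\gl(n)$-Verma admitting the trivial module as a subquotient, which rests on $0$ being regular dominant — that upgrades the hypothesis to the sharp obstruction $\la=\sum_{\alpha\in I}\alpha$. A secondary, purely bookkeeping, issue is to confirm that $\Delta(\la)|_{\G_\even}$ really lies in the BGG category of $\gl(n)$ and has finite length, so that the composition multiplicities used above are well defined and additive in short exact sequences.
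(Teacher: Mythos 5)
Your proof is correct and follows essentially the same route as the paper: both bound $[L(\la):L^0(0)]$ by the multiplicity in $\Delta(\la)|_{\G_\even}$, expand $\mathrm{ch}\,\Delta(\la)$ as $\dim I_\la\sum_{I\subseteq\Phi^+}\mathrm{ch}\,\Delta^0(\la-\sum_{\alpha\in I}\alpha)$, and observe that $L^0(0)$ can occur in $\Delta^0(\nu)$ only for $\nu=0$. The only difference is that you spell out the $\gl(n)$-linkage argument (regularity of $\rho$, $w(\rho)-\rho\in -\sum\Z_+\alpha$) that the paper compresses into the single remark that $0$ is a dominant $\G_\even$-weight.
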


\begin{proof}
It suffices to prove that in the category of $\G_\even$-modules we have $\left[\Delta(\la):L^0(0)\right]=0$, for $\la$ as in the assumption of the lemma. For $\mu\in\h_\even^*$ recall that $\Delta^0(\mu)$ denotes the $\G_\even$-Verma module of $\mf b_\even$-highest weight $\mu$.

We compute
\begin{align*}
{\rm ch}\Delta(\la)=&2^{[\ell(\la)/2]}{\rm ch}\Delta^0(\la)\prod_{\alpha\in\Phi^+}(1+e^{-\alpha})\\
=& 2^{[\ell(\la)/2]}\sum_{I\subseteq\Phi^+}{\rm ch}\Delta^0(\la-\sum_{\alpha\in I}\alpha).
\end{align*}
Now $0$ is a dominant $\G_\even$-weight. Hence, in order for $L^0(0)$ to appear as a composition factor in the $\G_\even$-Verma module $\Delta^0(\la-\sum_{\alpha\in I}\alpha)$ we must have $\la-\sum_{\alpha\in I}\alpha=0$, and thus $\la=\sum_{\alpha\in I}\alpha$. The lemma follows.
\end{proof}

From Lemma \ref{lem:to:finite} it follows that, for any fixed $n$, Conjecture \ref{conj:GGM} holds for all but (possibly) a finite number of irreducible modules. Indeed, it was stated in \cite{GG2} that Conjecture \ref{conj:GGM} has been checked for all but finitely many irreducible modules, however, the set of those irreducibles for which the conjecture may fail was not given in \cite{GG2}.

Recall $\rho=\hf\sum_{\alpha\in\Phi^+}\alpha$ and we have $2\rho=\sum_{i=1}^n (n-2i+1)\ep_i\in\La^+_\Z$. Note that all the coefficients of $2\rho$ are distinct.

\begin{lem}\label{lem:typ:0}
Let $\la\in\La^+_\Z$ be such that all $\la_i$s are distinct, and write $P_\la=\sum_{\mu}g_{\mu\la}s_\mu$. Then we have $g_{0\la}=\begin{cases}
0,\text{ if }\la\not=2\rho,\\
1,\text{ if }\la=2\rho.
\end{cases}$
\end{lem}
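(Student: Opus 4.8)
The plan is to compute the coefficient $g_{0\la}$ directly from the product formula \eqref{P:distinct}, which expresses $P_\la$ as the character of the tensor product $L^0(\rho)\otimes L^0(\la-\rho)$ whenever the $\la_i$ are distinct. Thus $g_{0\la}$ equals the multiplicity $[L^0(\rho)\otimes L^0(\la-\rho):L^0(0)]$ of the trivial $\gl(n)$-module inside that tensor product. Since $L^0(\rho)^* \cong L^0(-w_0\rho)$ where $w_0$ is the longest element of $\mf S_n$, and $-w_0\rho = \rho$ because $\rho$ is self-dual under $-w_0$ for $\gl(n)$, the trivial module appears in $L^0(\rho)\otimes L^0(\mu)$ if and only if $L^0(\mu)\cong L^0(\rho)^*\cong L^0(\rho)$, i.e.\ if and only if $\mu = \rho$; and in that case it appears exactly once (Schur's lemma). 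Hence $g_{0\la}=1$ precisely when $\la-\rho=\rho$, that is $\la=2\rho$, and $g_{0\la}=0$ otherwise.

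Carrying this out, the first step is to observe that the hypothesis that all $\la_i$ are distinct is exactly what licenses the chain of identities in \eqref{P:distinct}, so that $P_\la = {\rm ch}\!\left[L^0(\rho)\otimes L^0(\la-\rho)\right]$. The second step is to rewrite the coefficient $g_{0\la}$, which by \eqref{P:in:s} is the coefficient of $s_0 = {\rm ch}L^0(0)$ in the Schur expansion of $P_\la$; since the Schur polynomials $s_\mu$ are a basis and since the character of a $\gl(n)$-module determines its composition multiplicities, $g_{0\la} = [L^0(\rho)\otimes L^0(\la-\rho):L^0(0)]$. The third step is the standard tensor-identity computation $[V\otimes W : L^0(0)] = \dim\operatorname{Hom}_{\gl(n)}(W^*, V) = \dim\operatorname{Hom}_{\gl(n)}(V^*\otimes W, \mathbb C)$, which for $V = L^0(\rho)$ and $W = L^0(\mu)$ with $\mu$ dominant equals $\delta_{\mu, -w_0\rho}$. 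The fourth and final step is the elementary verification that $-w_0\rho = \rho$ for $\gl(n)$ — indeed $w_0$ reverses the order of the $\delta_i$, sending $\sum (n-2i+1)\ep_i$ to $\sum (2i-n-1)\ep_i = -\rho$ up to the relabelling $i\mapsto n+1-i$ — so $g_{0\la}\neq 0$ forces $\la-\rho = \rho$, i.e.\ $\la = 2\rho$, with value $1$.

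I expect the main (though modest) obstacle to be bookkeeping around the genuinely $\gl(n)$ versus $\mf{sl}(n)$ distinction: one must make sure that $L^0(\rho)$ and $L^0(\mu)$ are genuine $\gl(n)$-modules with matching central characters for the trivial module to appear at all, and that the ``generalized partition'' shift by $\sum_i \tfrac{n-1}{2}\ep_i$ appearing in \eqref{P:distinct} does not spoil the self-duality of $\rho$. All of this is routine once one notes that the tensoring-with-a-$1$-dimensional-character argument, or equivalently tracking the total degree, shows the shift is harmless. Alternatively, one could avoid representation theory entirely and argue combinatorially: $g_{0\la}$ is the multiplicity of $s_0$ in $s_\rho \cdot s_{\la-\rho}$ (after the harmless degree shift), and by the Littlewood–Richardson rule this is $\delta_{\la-\rho,\,\rho^\vee}$ where $\rho^\vee$ is the ``complement'' partition, which again is $\rho$ itself; either route gives the claimed dichotomy.
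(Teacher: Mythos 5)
Your proof is correct and follows essentially the same route as the paper's: invoke the factorization $P_\la = {\rm ch}\left[L^0(\rho)\otimes L^0(\la-\rho)\right]$ from \eqref{P:distinct}, note $-w_0\rho=\rho$ so $L^0(\rho)^*\cong L^0(\rho)$, and apply Schur's lemma to conclude the trivial module appears (with multiplicity one) exactly when $\la-\rho=\rho$. The extra remarks you make about the $\gl(n)$ center and the cancelling determinant shift are correct checks but do not change the argument.
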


\begin{proof}
Since all the $\la_i$s are distinct, $\la-\rho$ is a $\gl(n)$-dominant weight, and we have by \eqref{P:distinct}
\begin{align*}
P_\la={\rm ch}\left[L^0(\rho)\otimes L^0(\la-\rho)\right].
\end{align*}
Now $-w_0\rho=\rho$, where $w_0$ is the longest element in $\mf S_n$, and hence $L^0(\rho)^*\cong L^0(\rho)$. Thus, in order for $g_{0\la}\not=0$, we need to have $\rho=\la-\rho$ by Schur's lemma, and hence $\la=2\rho$, in which case $g_{0,2\rho}=1$.
\end{proof}

\begin{prop}\label{prop:aux3}
Let $\nu\in\La^+_\Z$ and assume that $\nu\not\succeq 0$. Regarding $L(\nu)$ as a $\G_\even$-module we have $\left[L(\nu):L^0(0)\right]=0$.
\end{prop}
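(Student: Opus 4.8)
The hypothesis $\nu \not\succeq 0$ means precisely that $L(\nu)$ does \emph{not} have the same central character as the trivial module, or that it does but $\nu - 0 \notin \sum_{\alpha \in \Phi^+} \Z_+ \alpha$. Since $L^0(0)$ has central character $\chi_0$ as a $\G_\even$-module, the plan is to show that a composition factor $L^0(0)$ can only occur in a $\G$-module whose restriction to $\G_\even$ contains the trivial $\gl(n)$-representation, and to leverage the linkage/central-character constraint on $\G$-modules to rule this out. Concretely, I would first record the elementary fact that a weight $\mu$ can appear in $L(\nu)$ only if $\nu - \mu \in \sum_{\alpha\in\Phi^+}\Z_+\alpha$ (all weights of $L(\nu)$, or even of $\Delta(\nu)$, lie below $\nu$ in the root order), so the zero weight space of $L(\nu)$ is nonzero only if $\nu \in \sum_{\alpha\in\Phi^+}\Z_+\alpha =: Q^+$. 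In particular if $\nu \notin Q^+$ then $[L(\nu):L^0(0)]=0$ trivially, since $L^0(0)$ contributes to the zero weight space.

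It then remains to handle the case $\nu \in Q^+$ but $\chi_\nu \neq \chi_0$. Here I would argue via central characters on the $\G$-side combined with Brundan's combinatorics. By \thmref{thm:Brundan}, $[L(\nu)]$ is an integral combination of the $[E(\mu)]$ for $\mu \in \La^+_\Z$ with $\mu \succeq \nu$ in an appropriate sense (those $\mu$ obtainable from $\nu$ by the $\texttt{R}_\theta$ operation), and by \propref{prop:same:central}(3) all the $\mc L^{\G,\mf l}_i$ constituting $E(\mu)$ share the central character $\chi_\mu$. Since the $\texttt{R}_\theta$ operation adds to $\nu$ a combination of the form $\sum_s \theta_s k_s(\ep_{i_s}-\ep_{j_s})$ with $\la_{i_s}+\la_{j_s}=0$ and $(\la,\ov\alpha_s)=0$, each such $\mu$ satisfies $\chi_\mu = \chi_\nu$. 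Hence every $E(\mu)$ appearing in the expansion of $[L(\nu)]$ has central character $\chi_\nu \neq \chi_0$. Now restrict to $\G_\even$: any composition factor $L^0(0)$ of $L(\nu)$ contributes to the virtual $\gl(n)$-character, and by \propref{prop:aux2} this $\gl(n)$-character equals $\sum_\mu (\text{coeff}) \cdot 2^{\lceil\ell(\mu)/2\rceil} P_\mu$ with $\mu$ ranging over weights with $\chi_\mu = \chi_\nu$. Expanding each $P_\mu = \sum_\sigma g_{\sigma\mu} s_\sigma$, the coefficient of $s_0 = \mathrm{ch}\,L^0(0)$ is $\sum_\mu(\cdots) g_{0\mu}$; I would show $g_{0\mu}=0$ for all these $\mu$ because $g_{0\mu}\neq 0$ forces (via \lemref{lem:typ:0} in the distinct-coordinates case, and its analogue in general) $\mu = 2\rho$, which has central character $\chi_{2\rho} \neq \chi_0 \neq \chi_\nu$—wait, this needs care: $\chi_{2\rho}$ versus $\chi_\nu$. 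So the cleaner route: $g_{0\mu} \neq 0$ requires $0$ to be a weight of $E(\mu)$, forcing $\mu \in Q^+$ and, more importantly, $\chi_\mu = \chi_0$ (since the zero weight space of any module with a character expansion in $P$'s / Schur functions forces linkage). Combined with $\chi_\mu = \chi_\nu \neq \chi_0$, this is a contradiction, so $g_{0\mu}=0$ throughout and $[L(\nu):L^0(0)]=0$.

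I expect the \textbf{main obstacle} to be making the central-character bookkeeping on the $\gl(n)$-side rigorous: one must be sure that ``$L^0(0)$ occurs as a $\G_\even$-composition factor of $L(\nu)$'' genuinely forces the $s_0$-coefficient in the Schur expansion of $\mathrm{ch}\,L(\nu)$ to be nonzero (this uses that restriction of $L(\nu)$ to $\G_\even$ is a \emph{nonnegative} integral combination of $L^0(\mu)$ and $\Pi L^0(\mu)$ by \remref{rem:coeffs:P}, so there is no cancellation in the $s_0$-coefficient), and then that this coefficient is computed from the $E(\mu)$-expansion with $\mu$ all of central character $\chi_\nu$. Once that is in place, the vanishing $g_{0\mu}=0$ for $\chi_\mu \neq \chi_0$ is the crux, and it follows from the weight-support argument (zero is not a weight of $P_\mu$ unless $\mu \in Q^+$ with $\chi_\mu=\chi_0$) together with \lemref{lem:typ:0}. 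A subtle point to double-check is the non-distinct-coordinate case of $\la$, where $P_\mu$ need not factor as in \eqnref{P:distinct}; there I would either reduce to the distinct case by the ``typical part'' / parabolic induction structure, or invoke the positivity of $g_{0\mu}$ from \remref{rem:coeffs:P} directly and argue by central character as above without needing the explicit tensor-product identity.
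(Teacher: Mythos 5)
Your overall strategy shares the main ingredients with the paper's proof (Brundan's theorem, \lemref{lem:typ:0}, the positivity of \remref{rem:coeffs:P}), but you run Brundan's formula \emph{backwards}, expanding $[L(\nu)]$ as an integral combination of $[E(\mu)]$'s, whereas the paper runs it \emph{forwards} for a single well-chosen $\la$. This difference is not cosmetic, and it is where your argument develops a genuine gap. By inverting the matrix you are forced to establish $g_{0\mu}=0$ for \emph{every} $\mu$ in the linkage class of $\nu$, and these $\mu$ need not have distinct coordinates, so \lemref{lem:typ:0} does not apply to them directly. Your proposed fallback --- that $g_{0\mu}\neq 0$ forces $0$ to be a weight of $E(\mu)$, ``forcing $\chi_\mu=\chi_0$'' --- is not correct: the zero weight can easily appear in $E(\mu)$ when $\chi_\mu\neq\chi_0$. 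For instance, with $n=3$ and the typical weight $\mu=2\delta_1+\delta_2-3\delta_3$ one has $\mu\in Q^+$, $E(\mu)=L(\mu)$, and as a $\gl(3)$-module ${\rm ch}L(\mu)=2\,{\rm ch}\bigl(L^0(\rho)\otimes L^0(\mu-\rho)\bigr)$, whose zero weight space is visibly nonzero (e.g.\ $\rho+(-1,0,1)=0$ with $(-1,0,1)$ a weight of $L^0((1,1,-2))$), even though $\chi_\mu\neq\chi_0$. In this particular example $g_{0\mu}$ still vanishes --- but by \lemref{lem:typ:0}, not by your central-character argument, which shows your parenthetical justification (``the zero weight space \ldots{} forces linkage'') fails. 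Once you are out of the distinct-coordinate regime the statement $g_{0\mu}=0$ is essentially equivalent to the proposition you are trying to prove, so going this route risks circularity. The paper sidesteps all of this by choosing one specific $\la=\texttt{R}_\theta(\nu)\in\La^+_\Z$ with all coordinates distinct, noting $\nu\not\succeq0$ forces $\la\neq2\rho$, applying \lemref{lem:typ:0} to that single $\la$, and then using the \emph{forward} expansion $[E(\la)]=\sum_\mu d_{\la\mu}[L(\mu)]$ together with the nonnegativity from \remref{rem:coeffs:P} to conclude that no cancellation can hide an occurrence of $L^0(0)$ in any summand, in particular in $L(\nu)$. You should adopt this forward/positivity argument in place of the inversion; your treatment of the easy sub-case $\nu\notin Q^+$ is fine but is then subsumed by the general argument.
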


\begin{proof}
By Theorem \ref{thm:Brundan} we have
\begin{align}\label{aux1}
\left[E(\la)\right]=\sum_{\mu\preceq\la}2^{\frac{{\ell(\la)}-\ell(\mu)}{2}}\left[L(\mu)\right],
\end{align}
where the sum is over all $\mu$s such that there exists $\theta_\mu$ with $\texttt{R}_{\theta_\mu}(\mu)=\la$. Taking characters we get by Proposition \ref{prop:aux2}
\begin{align*}
2^{\lceil\ell(\la)/2\rceil}P_\la={\rm ch}E(\la)=\sum_{\mu} 2^{\frac{{\ell(\la)}-\ell(\mu)}{2}}{\rm ch}L(\mu).
\end{align*}

Now, let $\nu\in\La^+_\Z$. We can find a $\theta$ such that $\texttt{R}_\theta(\nu)=\la$ with all the $\la_i$s distinct. Then $[L(\nu)]$ appears on the right-hand side of \eqref{aux1} as a non-zero summand. Now, if $0\not\preceq \nu$, then $0\not\preceq\la$, and hence we have $\la\not=2\rho$. By Lemma \ref{lem:typ:0} and Remark \ref{rem:coeffs:P} the trivial $\gl(n)$-module $L^0(0)$ does not appear as a composition factor of $L(\mu)$ for any of the $\mu$s in \eqref{aux1}. Thus, in particular $L^0(0)$ does not appear in $L(\nu)$ and hence $\left[L(\nu):L^0(0)\right]=0$.
\end{proof}

We are now in a position to prove Theorem \ref{thm:main}, which then together with Proposition \ref{prop:aux4} implies Theorem \ref{conj:GGM:finite}.

\begin{proof}[Proof of Theorem \ref{thm:main}]
Let $\la\in\La^+_\Z$. By Proposition \ref{prop:aux3} it remains to prove the case $\la\succeq 0$. We shall continue to use the notation of Section \ref{sec:Br:main}. Suppose that $\la_{i_{t+1}}=\cdots=\la_{i_p}=0$ and $\la_{i_{t}}\not=0$, for $1\le t\le p$. Define $\theta=(\theta_q)_{q=1}^p\in\Z_2^p$ by
\begin{align*}
\theta_{q}:=\begin{cases}
0,\text{ if }q=1,\ldots,t,\\
1,\text{ if }q=t+1,\ldots,p.
\end{cases}
\end{align*}
Define $\texttt{R}_\theta(\la):=\Psi\in\La^+_\Z$. Then all the $\Psi_i$s are distinct.

We distinguish two cases.

\noindent{\sc Case 1.} In the first case $\Psi\not=2\rho$. In this case we have
the following identity in $K(\mc O)$ by Theorem \ref{thm:Brundan}:
\begin{align}\label{eq:aux1}
[E(\Psi)]=\sum_{\nu}2^{\lceil (n-\ell(\nu)-1)/2\rceil}\left[L(\nu)\right],
\end{align}
where the sum is over all such $\nu$s for which there exists a $\theta_\nu$ with $\texttt{R}_{\theta_\nu}(\nu)=\Psi$. Since $\Psi\not=2\rho$, we have by Proposition \ref{prop:aux2} and Lemma \ref{lem:typ:0}
\begin{align*}
\left[E(\Psi):L^0(0)\right]=0.
\end{align*}
By Remark \ref{rem:coeffs:P} it follows that $\left[L(\nu):L^0(0)\right]=0$, for all $\nu$ on the right hand side of \eqref{eq:aux1}, and in particular $\left[L(\la):L^0(0)\right]=0$.

\noindent{\sc Case 2.} In the second case $\Psi=2\rho$. We first observe that choosing $\theta\in\Z_2^{\lceil (n-1)/2\rceil}$ such that $\theta_{q}=1$ for $1\le q\le\lceil (n-1)/2 \rceil$, we have $\texttt{R}_\theta(0)=2\rho$.
Thus, we have the following identity in $K(\mc O)$ by Theorem \ref{thm:Brundan}:
\begin{align}\label{eq:aux2}
[E(2\rho)]=2^{\lceil (n-1)/2\rceil}[L(0)]+\sum_{\nu}2^{\lceil (n-\ell(\nu)-1)/2\rceil}\left[L(\nu)\right],
\end{align}
where the sum is over all such $0\not=\nu$ such that there exists $\theta_\nu$ with $\texttt{R}_{\theta_\nu}(\nu)=2\rho$. In particular, we have
\begin{align*}
\left[E(2\rho):L(0)\right]=2^{\lceil (n-1)/2\rceil}.
\end{align*}
Now, ${\rm ch}E(2\rho)=2^{\lceil (n-1)/2\rceil} P_{2\rho}$ and by Lemma \ref{lem:typ:0} $[E(2\rho):L^0(0)]=2^{\lceil (n-1)/2\rceil}$ in the Grothendieck group of $\G_\even$-modules. This implies that
$\left[L(\nu):L^0(0)\right]=0$, for all $L(\nu)$ appearing in the right hand side of \eqref{eq:aux2}. Thus, in particular $\left[L(\la):L^0(0)\right]=0$.
\end{proof}

\begin{rem}\label{rem:last}
An analogous statement as Theorem \ref{thm:main} does not hold for irreducible modules in $\mc O$. For example, let $n=3$ and consider $\la=2\ep_1-\ep_2-\ep_3$. We have
\begin{align*}
{\rm ch}\Delta(\la)=4\left({\rm ch}\Delta^0(\la-\sum_{\alpha\in I}\alpha)\right),
\end{align*}
where the sum is over all subsets $I$ of $\{\ep_1-\ep_2,\ep_2-\ep_3,\ep_1-\ep_3\}$. From this, an easy calculation shows that in the category of $\G_\even$-modules we have
 $[\Delta(\la):L^0(0)]=4$. Clearly, $\la$ is typical and $\la\not\succeq 0$. Thus, we have $[\Delta(\la):L(0)]=0$. This implies that one of the $\mf{q}(3)$-composition factors must have a $\gl(3)$-composition factor equal to $L^0(0)$. Indeed, it is clear that for $\mu=w\la$, $w\in\mf S_3$, and $\mu\not=\la$ we must have $[L(\mu):L^0(0)]=0$ by weight consideration. It therefore follows that $\left[L(\la):L^0(0)\right]=4$.
\end{rem}

\bigskip
\frenchspacing

\end{document}